\newcommand{\alg}{\mathsf{a}}
\newcommand{\Alg}{\mathsf{A}}
\newcommand{\algd}{\text{\usefont{U}{bbm}{m}{n}{a}}}
\newcommand{\Algd}{\text{\usefont{U}{bbm}{m}{n}{A}}}
\newcommand{\At}{\mathsf{At}}
\newcommand{\HKR}{\mathsf{HKR}}
\newcommand{\KS}{\mathsf{KS}}
\def\deform{\mathsf{def}}
\def\cB{\mathcal{B}}
\def\QH{\mathsf{QH}}
\newcommand{\kalg}{\BbK\mbox{-}\mathtt{alg}}
\newcommand{\modd}{\mbox{-}\mathtt{mod}}
\newcommand{\bimod}{\mbox{-}\mathtt{bimod}}
\def\nc{\mathsf{nc}}
\newcommand{\dbdg}[1]{D^b_{\mathsf{dg}}Coh(#1)}
\def\cM{\mathcal{M}}
\def\@xfootnote[#1]{%
  \protected@xdef\@thefnmark{#1}%
  \@footnotemark\@footnotetext}
\begin{document}

\title{Automatic split-generation for the Fukaya category}

\author[Perutz and Sheridan]{Timothy Perutz\footnote{Partially supported by NSF grants DMS-1406418 and CAREER 1455265.} and Nick Sheridan\footnote{Partially supported by the National Science Foundation through Grant number
DMS-1310604, and under agreement number DMS-1128155. 
Any opinions, findings and conclusions or recommendations expressed in this material are those of the authors and do not necessarily reflect the views of the National Science Foundation.}}

\address{Timothy Perutz, University of Texas at Austin, Department of Mathematics, RLM 8.100, 2515 Speedway Stop C1200, Austin, TX 78712, USA.}
\address{Nick Sheridan, Department of Mathematics, Princeton University, Fine Hall, Washington Road, Princeton, NJ 08544-1000, USA.}

\begin{abstract}
{\sc Abstract:} We prove a structural result in mirror symmetry for projective Calabi--Yau (CY) manifolds. Let $X$ be a connected symplectic CY manifold, whose Fukaya category $\EuF(X)$ is defined over some suitable Novikov field $\BbK$; its mirror is assumed to be some smooth projective scheme $Y$ over $\BbK$ with `maximally unipotent monodromy'. Suppose that some split-generating subcategory of (a $\mathsf{dg}$ enhancement of) $D^bCoh( Y)$ embeds into $\EuF(X)$: we call this hypothesis `core homological mirror symmetry'. 
We prove that the embedding extends to an equivalence of categories, $D^bCoh(Y) \cong D^\pi( \EuF(X))$, using Abouzaid's split-generation criterion.  
Our results are not sensitive to the details of how the Fukaya category is set up. 
In work-in-preparation \cite{Perutz2015a}, we establish the necessary foundational tools in the setting of the `relative Fukaya category', which is defined using classical transversality theory.
\end{abstract}

\maketitle

\tableofcontents

\section{Introduction}

\subsection{Standing assumptions}
\label{subsec:setup}

Let $\Bbbk$ be a field of characteristic zero and $\BbK := \Lambda^R$ a Novikov field over $\Bbbk$, where $R \subset \R$ is an additive subgroup: that is,
\[ \BbK := \left\{ \sum_{j=0}^\infty c_j \cdot q^{\lambda_j}: c_j \in \Bbbk, \lambda_j \in R, \lim_{j \to \infty} \lambda_j = +\infty\right\}.\]

Let $(X,\omega)$ be a compact, connected, Calabi-Yau symplectic manifold of dimension $2n$ (`Calabi-Yau' here means $c_1(TX) = 0$). 

Let $Y \to \cM = \spec \BbK$ be a smooth, projective, Calabi-Yau algebraic scheme of relative dimension $n$ (`Calabi-Yau' here means the canonical sheaf is trivial).

\subsection{The Fukaya category}
\label{subsec:fukint}

We consider the Fukaya category of $X$, denoted $\EuF(X)$. 
Of course, there are a number of possibilities in defining the Fukaya category, which depend on various additional choices. 
We will always restrict ourselves to definitions where $\EuF(X)$ is a $\Z$-graded, $\BbK$-linear $A_\infty$ category (in particular, the curvature $\mu^0$ vanishes). 

In \S \ref{sec:fuk}, we give a list of properties that we need the Fukaya category $\EuF(X)$ to have in order for our results to work. 
We expect these properties to hold very generally, so we do not tie ourselves to a particular version of the Fukaya category. 

It will be proven in \cite{Perutz2015a} (in preparation) that a version called the \emph{relative Fukaya category} has all of these properties, so the range of applicability of our results is not empty. 
Let us briefly outline what that construction looks like, so the reader can keep a concrete example in mind. 

It depends on a choice of \emph{Calabi-Yau relative K\"{a}hler manifold}: that is, a Calabi-Yau K\"{a}hler manifold $(X,\omega)$, together with an ample simple normal crossings divisor $D \subset X$, and a proper K\"{a}hler potential $h$ for $\omega$ on $X \setminus D$: in particular, $\omega = d\alpha$ is exact on $X \setminus D$, where $\alpha := d^c h$. 
Its objects are closed, exact Lagrangian branes $L \subset X \setminus D$. 
Floer-theoretic operations are defined by counting pseudoholomorphic curves $u\co \Sigma \to X$, with boundary on Lagrangians in $X \setminus D$ (transversality of the moduli spaces is achieved using the stabilizing divisor method of Cieliebak and Mohnke \cite{Cieliebak2007}). 
These counts of curves $u$ are weighted by $q^{\omega(u) - \alpha(\partial u)}$: so the category is defined over $\BbK = \Lambda^{R}$, where $R$ contains the image of the map
\begin{eqnarray*}
H_2(X,X \setminus D) & \to & \R, \\
u & \mapsto & \omega(u) - \alpha(\partial u).
\end{eqnarray*}
The resulting curved $A_\infty$ category is denoted $\EuF(X,D)_{curv}$. 
We then define $\EuF(X,D)$, which is an honest $A_\infty$ category (one without curvature): its objects are objects of $\EuF(X,D)_{curv}$, equipped with bounding cochains.

The analogues of the necessary properties in the monotone case have also been established in \cite{Sheridan2013} (but in that situation, $X$ is not Calabi-Yau so our results do not apply).
It is expected that work in preparation of Abouzaid, Fukaya, Oh, Ohta and Ono \cite{Abouzaid2012} will prove that the Fukaya category of an arbitrary symplectic manifold has all of the necessary properties except for that described in \S \ref{subsec:defmap} (which ought to hold in full generality, but is easier to prove for the relative Fukaya category).

We emphasise that, if you want to apply our results to your favourite version of the Fukaya category, you just need to verify that it has the properties outlined in \S \ref{sec:fuk}.

\subsection{Split-generating the Fukaya category}

Now let us recall Abouzaid's split-generation criterion \cite{Abouzaid2010a}, adapted to the present setting (following \cite{Abouzaid2012}, see also \cite{Ritter2012} and \cite{Sheridan2013} for the monotone case). 
It concerns the open-closed string map:
\begin{equation}
\label{eqn:oc}
 \EuO\EuC \co \HH_\bullet(\EuF(X)) \to \QH^{\bullet+n}(X).
\end{equation}

\begin{thm}
\label{thm:abouzaid} 
Let $\EuA$ be a full subcategory of $\EuF(X)$. 
If the identity $e \in \QH^0(X)$ lies in the image of the map
\begin{equation}
\label{eqn:oc0}
\EuO\EuC|_\EuA \co \HH_{-n}(\EuA) \to \QH^0(X),
\end{equation}
then $\EuA$ split-generates $\EuF$.
\end{thm}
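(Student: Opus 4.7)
The plan is to follow Abouzaid's strategy \cite{Abouzaid2010a}. I fix an arbitrary object $L \in \EuF(X)$ and aim to retract $L$ off a twisted complex built from objects of $\EuA$. Such a retraction amounts to expressing $\mathrm{id}_L \in HF^0(L,L)$ as a sum of compositions $\sum_i \phi_i \circ \psi_i$ with $\psi_i \in HF^\bullet(L,K_i)$, $\phi_i \in HF^\bullet(K_i,L)$ and $K_i \in \EuA$; equivalently, $\mathrm{id}_L$ should lie in the image of the ``Yoneda evaluation'' map $\mu \co \mathcal{Y}^L \otimes_\EuA {}^L\mathcal{Y} \to HF^\bullet(L,L)$, where $\mathcal{Y}^L$ and ${}^L\mathcal{Y}$ denote the right and left Yoneda $\EuA$-modules associated to $L$.

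The central construction will be a moduli space $\mathcal{M}^L$ of disks carrying one interior marked input, cyclically ordered boundary inputs labelled by objects of $\EuA$, and two distinguished boundary marked points lying on $L$, separated by an empty boundary arc. Counting rigid disks in $\mathcal{M}^L$ should define a chain map
\[
\widetilde{\EuO\EuC}^L \co CC_\bullet(\EuA) \longrightarrow \mathcal{Y}^L \otimes_\EuA {}^L\mathcal{Y}.
\]
Examining two degenerations of this moduli family---letting the empty boundary arc shrink to a point, versus letting it expand to consume the entire boundary---should produce a chain homotopy
\[
\EuC\EuO^L \circ \EuO\EuC|_\EuA \;\simeq\; \mu \circ \widetilde{\EuO\EuC}^L,
\]
where $\EuC\EuO^L \co \QH^\bullet(X) \to HF^\bullet(L,L)$ is the closed-open map into $L$.

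Assuming this homotopy, the theorem will be immediate: since $\EuC\EuO^L$ is a unital algebra homomorphism, $\EuC\EuO^L(e) = \mathrm{id}_L$; if $e = \EuO\EuC|_\EuA([\alpha])$ for some $[\alpha] \in HH_{-n}(\EuA)$, then $\mathrm{id}_L = \mu(\widetilde{\EuO\EuC}^L([\alpha]))$ lies in the image of $\mu$, forcing $L$ to be split-generated by $\EuA$.

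The main obstacle will be the construction of $\mathcal{M}^L$ and the identification of its codimension-one strata with precisely the four operations $\EuO\EuC|_\EuA$, $\EuC\EuO^L$, $\mu$, and the $\EuA$-module structure maps on $\mathcal{Y}^L$ and ${}^L\mathcal{Y}$, with no spurious contributions. This demands transversality for a moduli problem mixing a closed-string interior insertion with boundary conditions on two distinct Lagrangian populations, together with a Gromov-compactness analysis ruling out disk bubbling that would spoil the cobordism identity. Both should be tractable in the relative Fukaya framework of \cite{Perutz2015a} via Cieliebak--Mohnke stabilizing divisors, but the combinatorial bookkeeping of boundary strata---ensuring that every degeneration is accounted for on the algebraic side---is where the bulk of the work lies.
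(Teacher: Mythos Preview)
Your overall strategy is correct and is exactly what the paper does: it simply states that the proof is identical to that of \cite[Theorem 1.1]{Abouzaid2010a}. The paper, however, packages the geometric content differently. Rather than constructing the moduli spaces inside the proof, it isolates the coproduct $\Delta\colon \EuF_\Delta \to \mathcal{Y}^l_K \otimes_\BbK \mathcal{Y}^r_K$ (\S\ref{subsec:cop}) and the Cardy relation (\S\ref{subsec:cardy}) as \emph{axioms} that the Fukaya category is assumed to satisfy. Your homotopy $\EuC\EuO^L \circ \EuO\EuC|_\EuA \simeq \mu \circ \widetilde{\EuO\EuC}^L$ is precisely the Cardy diagram of \S\ref{subsec:cardy}, and your $\widetilde{\EuO\EuC}^L$ is $\HH_\bullet(\Delta)$. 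Once those are granted, the argument is the purely algebraic one you give in your final paragraph, with no further analysis needed.

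One technical point in your sketch is off. The map $\widetilde{\EuO\EuC}^L \colon CC_\bullet(\EuA) \to \mathcal{Y}^L \otimes_\EuA {}^L\mathcal{Y}$ is the coproduct; the disks defining it carry \emph{no} interior marked point---only the cyclic boundary inputs from $\EuA$ and the two distinguished outputs on $L$. The interior marked point belongs instead to the \emph{annulus} moduli space that furnishes the Cardy homotopy: one boundary circle carries the cyclic $\EuA$-inputs, the other lies on $L$ with a single output, and the two codimension-one strata are (i) the annulus pinching at an interior circle, giving $\EuC\EuO^0 \circ \EuO\EuC|_\EuA$, and (ii) the annulus breaking at a boundary node, giving $H^*(\mu) \circ \HH_\bullet(\Delta)$. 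Your ``empty arc shrinking/expanding'' picture conflates these two moduli problems; separating them is what makes the bookkeeping of boundary strata tractable.
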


\begin{rmk}
When $X$ is a Liouville manifold, Theorem \ref{thm:abouzaid} was proved (for the wrapped Fukaya category) in \cite{Abouzaid2010a}. 
It is expected to be proven in full generality in \cite{Abouzaid2012}.
\end{rmk}

There is also a dual version, involving the closed-open string map:
\[ \EuC\EuO \co \QH^\bullet(X) \to \HH^\bullet(\EuF(X)).\]

\begin{thm}
\label{thm:coinj} 
Let $\EuA$ be a full subcategory of $\EuF(X)$. 
If the map
\[
 \EuC\EuO|_\EuA \co \QH^{2n}(X) \to \HH^{2n}(\EuA)
\]
is injective, then $\EuA$ split-generates $\EuF$.
\end{thm}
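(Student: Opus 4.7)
The plan is to deduce Theorem~\ref{thm:coinj} from Theorem~\ref{thm:abouzaid} by a duality argument. The key structural inputs are the perfect Poincaré duality pairing
\[ \langle -,-\rangle_{PD} \co \QH^{\bullet}(X) \otimes \QH^{2n-\bullet}(X) \to \BbK \]
on the closed side; a Mukai-type pairing $\langle -,-\rangle_{\HH} \co \HH^{\bullet}(\EuA) \otimes \HH_{n-\bullet}(\EuA) \to \BbK$ on the Hochschild side, arising from the Calabi--Yau structure carried by $\EuF(X)$; and a Cardy-type adjointness
\[ \langle \alpha,\,\EuO\EuC(\beta)\rangle_{PD} \;=\; \langle \EuC\EuO(\alpha),\,\beta\rangle_{\HH} \qquad (\alpha\in \QH^{\bullet}(X),\ \beta\in \HH_{\bullet}(\EuA)) \]
relating the two. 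This Cardy identity is a standard feature of the Fukaya open/closed maps (compare \cite{Sheridan2013} in the monotone setting) and should be part of the package axiomatised in \S\ref{sec:fuk}.

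Granting these inputs, the argument is formal. By Theorem~\ref{thm:abouzaid} it suffices to show that $e \in \QH^{0}(X)$ lies in $V := \mathrm{image}(\EuO\EuC|_{\EuA})$. Suppose not; by perfectness of $\langle -,-\rangle_{PD}$ there exists $\alpha\in \QH^{2n}(X)$ with $\langle\alpha,e\rangle_{PD}\neq 0$ yet $\langle \alpha, v\rangle_{PD}=0$ for every $v\in V$. In particular $\langle\alpha,\EuO\EuC(\beta)\rangle_{PD}=0$ for every $\beta\in \HH_{-n}(\EuA)$, and the Cardy identity upgrades this to $\langle \EuC\EuO(\alpha),\beta\rangle_{\HH}=0$ for every such $\beta$. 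Since $\alpha\neq 0$, the injectivity hypothesis forces $\EuC\EuO(\alpha)\neq 0$ in $\HH^{2n}(\EuA)$; non-degeneracy of $\langle -,-\rangle_{\HH}$ against the image of $\EuC\EuO|_{\EuA}$ then yields a contradiction.

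The main obstacle lies in this last non-degeneracy statement: the map $\HH^{2n}(\EuA) \to \HH_{-n}(\EuA)^{\vee}$ induced by $\langle -,-\rangle_{\HH}$ need not be injective for a general full subcategory $\EuA \subset \EuF(X)$, which is not required to be proper or Calabi--Yau on its own. What we actually need is injectivity of its restriction to $\mathrm{image}(\EuC\EuO|_{\EuA})$; this has to be extracted from the geometric construction of both sides of the Cardy identity, combined with perfectness of Poincaré duality on $X$. Pinning down the Cardy adjointness (and the consequent relative non-degeneracy) at the axiomatic level of \S\ref{sec:fuk} is where essentially all the work lies; once it is available, the duality argument above is routine degree-counting.
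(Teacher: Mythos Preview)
Your strategy—deduce the result from Theorem~\ref{thm:abouzaid} by dualising—is exactly the paper's. But the ``main obstacle'' you flag is not an obstacle at all; you have overlooked two of the axioms in \S\ref{sec:fuk}. You write that $\EuA$ ``is not required to be proper or Calabi--Yau on its own''; in fact it is both. Properness (\S\ref{subsec:fuk}) is a finiteness condition on individual $\mathrm{Hom}$-spaces and is trivially inherited by any full subcategory. The weak proper Calabi--Yau structure of \S\ref{subsec:wcyc} is likewise phrased as non-degeneracy of the pairing on each $\mathrm{Hom}^\bullet_\EuF(K,L)\otimes \mathrm{Hom}^{n-\bullet}_\EuF(L,K)$, so it too passes to $\EuA$. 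The consequence recorded there—that $\alpha\mapsto [\phi]\cap\alpha$ is an \emph{isomorphism} $\HH^\bullet(\EuA)\to \HH_\bullet(\EuA)^\vee[-n]$—is precisely the Hochschild-side non-degeneracy you were worried about, and it holds on the nose, not merely after restricting to the image of $\EuC\EuO$. Nothing needs to be ``extracted from the geometric construction.''

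The adjointness you call ``Cardy-type'' is already axiomatised: it is the commutative square at the end of \S\ref{subsec:wcyc} (note this is distinct from the Cardy relation of \S\ref{subsec:cardy}, which plays no role here). With that square and the two duality isomorphisms in hand, the paper's proof is two lines: the square exhibits $\EuC\EuO|_\EuA$ in degree $2n$ as the linear dual of $\EuO\EuC|_\EuA\colon \HH_{-n}(\EuA)\to \QH^0(X)$; injectivity of the former forces surjectivity of the latter, hence $e$ lies in the image; apply Theorem~\ref{thm:abouzaid}.
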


\begin{rmk}
Theorem \ref{thm:coinj} is expected to be proven for Fukaya categories of compact symplectic manifolds in \cite{Abouzaid2012}.
\end{rmk}

In \S \ref{subsec:gen}, we explain how Theorems \ref{thm:abouzaid} and \ref{thm:coinj} are proved, in particular, which of the properties from \S \ref{sec:fuk} they rely on.

\subsection{Homological mirror symmetry}

For any $A_\infty$ category $\EuC$, we denote by `$\twsplit \EuC$' the split-closed triangulated envelope (denoted `$\Pi(Tw \,\EuC)$' in \cite[\S 4c]{Seidel2008}).

Let $\dbdg{Y}$ be a $\mathsf{dg}$ enhancement of the bounded derived category of  coherent sheaves $D^bCoh(Y)$: we regard it as a $\Z$-graded, $\BbK$-linear, triangulated $A_\infty$ category. 
Because $Y$ is projective, the $\mathsf{dg}$ enhancement is unique up to quasi-equivalence, by \cite[Theorem 8.13]{Lunts2010}. 
It is split-closed, in the $A_\infty$ sense (see \cite[Lemma 5.3]{Seidel2003}).

\begin{defn}
$X$ and $Y$ are said to be \emph{homologically mirror} if there exists an $A_\infty$ quasi-equivalence of $\BbK$-linear, $\Z$-graded, triangulated, split-closed $A_\infty$ categories
\[ \psi \colon \twsplit \EuF(X) \to \dbdg{Y}.\]
\end{defn}

Here, `$\twsplit$' denotes the split-closed triangulated envelope (see \cite[\S 4c]{Seidel2008}).

\subsection{Maximally unipotent monodromy}

We can think of $Y \to \cM$ as a family of $\Bbbk$-schemes parametrized by $\cM$: there is an associated \emph{Kodaira--Spencer map}
\[ \mathsf{KS}_{class} \co T\cM \to H^1(Y, \mathcal{T} Y),\]
where $T\cM := \deriv_\Bbbk \BbK$ is the $\Bbbk$-relative tangent space, and $\mathcal{T}Y$ is the $\BbK$-relative tangent sheaf (see \S \ref{subsec:schdef} for a definition of $\mathsf{KS}_{class}$).

\begin{defn}
\label{defn:maxunip}
We say that $Y \to \cM$ is \emph{maximally unipotent} if 
\[ \mathsf{KS}(\partial_q)^n \neq 0.\]
Here, $n$ is the relative dimension of $Y \to\cM$, and the power is taken with respect to the natural product on the tangential cohomology,
\[ HT^\bullet(Y) := H^\bullet(Y,\wedge\!^\bullet \, \mathcal{T}Y).\]
\end{defn}

\begin{example}
Suppose $\Bbbk = \C$ and $\BbK = \Lambda^\Z = \C\laurent{q}$. 
Let $T$ denote the monodromy of the family $Y$ about $q=0$, acting in the middle algebraic de Rham cohomology $H_{DR}^n(Y/ \cM)$.
By the `monodromy theorem',  one has $T=T_s T_u=T_u T_s$, where $T_s$ has finite order, and  $(T_u-1)^{n+1}=0$. 
Recall that the family is said to have \emph{maximally unipotent monodromy} if $T_s=I$ (so $T=T_u$) and $(T-1)^n\neq 0$ (see, e.g., \cite[\S 5.2]{coxkatz}). 
If the family has maximally unipotent monodromy, then it is maximally unipotent in the sense of Definition \ref{defn:maxunip} (hence the name).
\end{example}

\begin{rmk}
If $X$ and $Y$ are a Calabi-Yau mirror pair (in the sense of Hodge theoretic mirror symmetry \cite{coxkatz}), then $Y$ will always be maximally unipotent. 
Indeed, $T$ is conjugate to $\exp (-2\pi i \Res (\nabla_{d/dq}))$, and so it suffices to show that  $\Res(\nabla_{d/dq})$ is nilpotent of exponent precisely $n$. 
The mirror to this statement is the obvious fact that $[\omega] \in H^\bullet(X;\C)$ is nilpotent of exponent precisely $n$, by nondegeneracy of $\omega$. 
Note that we use the classical (not quantum) product since we are working at the $q=0$ limit (see \cite[\S 8.5.3]{coxkatz}). 
\end{rmk}

\subsection{Main theorem}

\begin{defn}
\label{defn:corehms}
Let $X$ and $Y$ be as in \S \ref{subsec:setup}, and $Y$ be maximally unipotent. 
We say that such $X$ and $Y$ satisfy \emph{core HMS} if there exists a diagram
\[ \xymatrix{ \EuF(X) \ar@{}[d]|-*[@]{\supset} & \dbdg{Y} \ar@{}[d]|-*[@]{\supset} \\
\EuA \ar[r]^{\psi}& \EuB}\]
where
\begin{enumerate}
\item
$\EuB\subset \dbdg{Y}$ is a full subcategory which split-generates;
\item 
$\EuA \subset \EuF(X)$ is a full subcategory; and
\item
$\psi \co \EuA \to \EuB$ is a quasi-equivalence of $A_\infty$ categories.
\end{enumerate}
\end{defn}

For the purposes of the following theorem, we assume that the Fukaya category $\EuF(X)$ has the properties outlined in \S \ref{subsec:fuk}.

\begin{main}\label{gen}
Suppose that $X$ and $Y$ satisfy core HMS. 
Then $\EuA$ split-generates $\EuF(X)$. 
It follows that $X$ and $Y$ are homologically mirror (via a quasi-equivalence extending $\psi$). 
\end{main}

We now record a further result. 
The following definition is from \cite{Ganatra2013}:

\begin{defn}
If the identity $e \in \QH^0(X)$ lies in the image of the open-closed map \eqref{eqn:oc}, then we say that $\EuF(X)$ is \emph{non-degenerate}.
\end{defn}

\begin{main}\label{nondeg}
Let $X$ and $Y$ be as in \S \ref{subsec:setup}, and $Y$ be maximally unipotent. 
If $X$ and $Y$ are homologically mirror, then $\EuF(X)$ is non-degenerate.
\end{main}

The importance of Theorem \ref{nondeg} is that, with some further work, non-degeneracy is sufficient for the closed-open and open-closed maps to be isomorphisms. 
This was proved in \cite{Ganatra2013} in the case of Liouville manifolds, and will be extended in \cite{Ganatra2015} (in preparation) in the case at hand.

Theorems \ref{gen} and \ref{nondeg} will be proved in \S \ref{subsec:pf}. 
The basic idea of the proof of Theorem \ref{gen} is this: check that $\EuA$ satisfies the hypothesis of Theorem \ref{thm:coinj}, by transferring it (via core HMS) to an equivalent hypothesis on $\EuB$, which turns out to be equivalent to maximal unipotence of $Y$. 
The idea for Theorem \ref{nondeg} is similar.

\subsection{The relative and absolute Fukaya categories}

Let us make one remark on potential applications of our results to symplectic topology. 
As we mentioned in \S \ref{subsec:fukint}, when $X$ is equipped with an appropriate divisor $D$ (possibly normal-crossings), one can define the relative Fukaya category $\EuF(X,D)$ \cite{Seidel2002,Sheridan2015,Perutz2015a}: its objects are exact Lagrangian branes in the complement of $D$.
One can also define the absolute Fukaya category $\EuF(X)$ \cite{fooo,Abouzaid2012}. 
Its objects are Lagrangian branes in $X$. 

\begin{conj} 
\label{conj:relabs}
(compare \cite[Assumption 8.1]{Sheridan2015})
There is an embedding of $A_\infty$ categories
\begin{equation}
\label{eqn:relabs}
\EuF(X,D) \subset \EuF(X)
\end{equation}
(possibly after extending the coefficients of $\EuF(X,D)$ to a larger Novikov field). 
The embedding respects open-closed string maps.
\end{conj}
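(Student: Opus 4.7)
The plan is to construct the embedding as the identity on underlying Lagrangians, combined with a diagonal rescaling of Floer generators by primitives of $\alpha$ on the exact Lagrangians, with the main technical work being a comparison of the two moduli-space regularization schemes.

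First I would define the functor on objects. An object of $\EuF(X,D)$ is a closed exact Lagrangian brane $L \subset X \setminus D$, equipped with a primitive $f_L \co L \to \R$ of $\alpha|_L$, brane data, and a bounding cochain $b$. The functor sends this to the same Lagrangian $L$ viewed as a brane in $\EuF(X)$, with brane data preserved and $b$ transported by the diagonal $A_\infty$ isomorphism described next.

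To define the functor on morphisms, I would use a diagonal rescaling on Floer complexes. For a transverse intersection point $p \in L_0 \cap L_1$, set $a(p) := f_{L_1}(p) - f_{L_0}(p)$, and let $\Phi(p) := q^{a(p)} \cdot p$. Applying Stokes' theorem to each Lagrangian boundary arc of a pseudoholomorphic disc $u$ contributing to an $A_\infty$ operation gives the area--action identity
\[ \alpha(\partial u) = a(p_{\text{out}}) - \sum_{i=1}^d a(p_{i,\text{in}}),\]
where $p_{\text{out}}$ and $p_{1,\text{in}}, \ldots, p_{d,\text{in}}$ are the output and input corners of $u$. Hence the weight $q^{\omega(u) - \alpha(\partial u)}$ defining the $\EuF(X,D)$ structure maps transforms into the weight $q^{\omega(u)}$ defining the $\EuF(X)$ structure maps exactly after applying $\Phi$. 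This also explains the coefficient extension in the conjecture: the values $a(p)$ need not lie in the subgroup $R \subset \R$ used to define $\EuF(X,D)$, so one must pass to a Novikov field whose exponent group contains both $R$ and all such action values. The bounding cochain $b$ is transported to $\Phi(b)$, which automatically satisfies the Maurer--Cartan equation in $\EuF(X)$.

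The heart of the argument is the moduli-space comparison. The relative Fukaya category $\EuF(X,D)$ is defined using classical transversality via the Cieliebak--Mohnke method with a stabilizing divisor disjoint from $D$, while $\EuF(X)$ is typically defined using a virtual perturbation scheme, such as the Kuranishi approach envisioned in \cite{Abouzaid2012}. I would construct a common regularization for the moduli of discs with boundary on Lagrangians contained in $X \setminus D$ by interpolating between the two perturbation schemes inside a unified framework, and extract an $A_\infty$ homotopy between the two resulting structures, which upgrades by standard formal arguments to a full $A_\infty$ embedding. The open-closed compatibility is handled in parallel: both open-closed maps count discs carrying one interior marked point, so the same identification on the enlarged moduli spaces, combined with the rescaling $\Phi$, yields the required commutativity.

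The main obstacle will be this last moduli-space comparison: reconciling classical and virtual perturbation schemes inside a single framework rich enough to compute both $A_\infty$ structures and both open-closed maps. This is precisely the foundational work deferred to \cite{Perutz2015a} and \cite{Abouzaid2012}, and any proof of the conjecture will rest on those foundations being set up compatibly.
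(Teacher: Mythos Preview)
The statement you are attempting to prove is stated in the paper as a \emph{conjecture}, not a theorem: the paper contains no proof of it. Indeed, the surrounding discussion makes clear that the authors regard it as an open foundational problem. They explicitly write that to achieve it ``one would have to relate the stabilizing divisor framework to the Kuranishi space framework of \cite{fooo,Abouzaid2012}, which we have not done,'' and they label the analogous statement in \cite{Sheridan2015} as ``an assumption with no claim of proof.'' So there is nothing in the paper to compare your proposal against.

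That said, your outline is a reasonable sketch of what a proof would have to look like, and you have correctly isolated the genuine obstacle. The diagonal rescaling by action values and the accompanying Stokes computation are standard and correct, and they do explain the need for the coefficient extension. But as you yourself note in your final paragraph, the entire content of the conjecture lies in the moduli-space comparison between the stabilizing-divisor regularization used for $\EuF(X,D)$ and the virtual perturbation scheme used for $\EuF(X)$. Your proposal does not supply this; it defers it to \cite{Perutz2015a} and \cite{Abouzaid2012}, exactly as the paper does. So what you have written is not a proof but an accurate restatement of why the conjecture is expected to hold and where the difficulty lies --- which is also all the paper offers.
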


The embedding \eqref{eqn:relabs} appears far from being essentially surjective, as its image consists of Lagrangian branes in $X$ which are exact in the complement of $D$: a very restricted class.

Clearly, the absolute Fukaya category $\EuF(X)$ is more complicated and interesting from the point of view of symplectic topology: it's harder to understand Lagrangians in $X$ than it is to understand exact Lagrangians in $X \setminus D$. 
Nevertheless, observe the following. 
If $\EuA \subset \EuF(X,D)$ satisfies the hypothesis of the split-generation criterion Theorem \ref{thm:abouzaid}, then the image of $\EuA$ under the embedding \eqref{eqn:relabs} will also satisfy the hypothesis of the split-generation criterion (since the embedding respects open-closed maps), and hence split-generate. 

Now suppose that we have established core HMS  for the relative Fukaya category: $\EuA \subset \EuF(X,D)$ is a full $A_\infty$ subcategory which is quasi-equivalent to a split-generating full $\mathsf{dg}$ subcategory $\EuB \subset \dbdg{Y}$.  
Theorem \ref{gen} implies that $\EuA$ split-generates $\EuF(X,D)$: in fact, the proof shows that it satisfies the hypothesis of Theorem \ref{thm:abouzaid}. 
It follows that the image of $\EuA$ under \eqref{eqn:relabs} split-generates $\EuF(X)$, so in fact we have a quasi-equivalence 
\begin{equation}
\label{eqn:abshms} \twsplit\EuF(X) \cong \dbdg{Y}.
\end{equation}
In particular, homological mirror symmetry holds for the absolute Fukaya category $\EuF(X)$, not just for the relative Fukaya category $\EuF(X,D)$. 

Hence, to prove homological mirror symmetry for the absolute Fukaya category \eqref{eqn:abshms}, it suffices to separate the problem into two parts:
\begin{enumerate}
\item \label{it:phms} Prove core HMS for the relative Fukaya category;
\item \label{it:relabs} Prove Conjecture \ref{conj:relabs}.
\end{enumerate}
Part \eqref{it:phms} can be approached by following the blueprint `compute the exact Fukaya category of $X \setminus D$, then solve the deformation problem when one plugs the divisor back in', first outlined in \cite{Seidel2002}. 
This has been carried out for the quartic K3 surface in \cite{Seidel2003} and for higher-dimensional Calabi-Yau hypersurfaces in projective space in \cite{Sheridan2013} (with the caveat that the mirror is a category of \emph{equivariant} coherent sheaves on a scheme with maximally unipotent monodromy in those cases, so some minor alterations to our arguments are necessary). 
In many cases \cite{Sheridan2015,Perutz2015a}, the pseudoholomorphic curve theory involved in part \eqref{it:phms} can be treated using the stabilizing divisor method.

Part \eqref{it:relabs} is a foundational question, about how one sets up one's moduli spaces of pseudoholomorphic curves, and has nothing to do with mirror symmetry. 
If one constructs the relative Fukaya category and the absolute Fukaya category within the same analytic framework, it may be rather trivial (compare the argument in the monotone case \cite{Sheridan2013}). 
However, the relative Fukaya category used in \cite{Sheridan2015,Perutz2015a} is constructed using stabilizing divisors, which while they have the advantage of making the pseudoholomorphic curve theory classical, have the disadvantage of not extending in any straightforward way to give a construction of the absolute Fukaya category (although see \cite{Charest2015}): so to achieve part \eqref{it:relabs}, one would have to relate the stabilizing divisor framework to the Kuranishi space framework of \cite{fooo,Abouzaid2012}, which we have not done.
This is a crucial step if one wants to turn homological mirror symmetry for the relative Fukaya category (such as the result proved in \cite{Sheridan2015}, where part \eqref{it:relabs} was labelled as an assumption with no claim of proof) into homological mirror symmetry for the absolute Fukaya category, and hence say something about the symplectic topology of $X$.

\subsection{Applications}

The case that $X$ is a Calabi-Yau Fermat hypersurface in projective space, and $D$ the intersection of $X$ with the toric boundary of projective space, was considered in \cite{Sheridan2015}. 
Core HMS was proved for a certain full subcategory $\EuA \subset \EuF(X,D)$, consisting of a configuration of Lagrangian spheres in $X \setminus D$. 
Split-generation was proved (based on the assumption, without proof, that the relative and absolute Fukaya categories are related as explained in the previous section), by explicitly computing $\HH^\bullet(\EuA)$ then applying Theorem \ref{thm:coinj}. 
The results in this paper remove the need for this explicit computation of $\HH^\bullet(\EuA)$ (by transferring it to the algebraic geometry side by core HMS, where it is known by the Hochschild-Kostant-Rosenberg isomorphism), and formalize the whole argument.

Core HMS has been proved for the full subcategory $\EuA \subset \EuF(T^*B/T^*B_\Z)$ of Lagrangian sections of a non-singular SYZ torus fibrations with base an integral affine manifold $B$ \cite{Kontsevich2001}. 
The mirror is the dual torus fibration, interpreted as a rigid analytic space rather than as a scheme, and $\EuA$ is mirror to the category of vector bundles (which split-generates the derived category of coherent sheaves, because the mirror space is smooth). 
In the case of abelian varieties, one can prove a similar result, interpreting the mirror instead as an abelian variety \cite{Fukaya2002a}. 
Assuming that the Fukaya category can be shown to satisfy the properties axiomatised in \S \ref{sec:fuk} in this case, our arguments (with appropriate modifications if the mirror is a rigid analytic manifold) should complete these core HMS results to a full proof of homological mirror symmetry (for products of elliptic curves, this was carried out by a different method in \cite{Abouzaid2010d}). 
This allows one to study Lagrangians which are not sections of the torus fibration, in terms of coherent sheaves on the mirror space.

More generally (i.e., allowing for singularities in the Lagrangian torus fibration), a sketch proof of core HMS on the cohomology level is outlined for Gross-Siebert mirror pairs in \cite[Chapter 8]{Aspinwall2009}. 
The subcategory $\EuA$ consists of an infinite family of sections of the Lagrangian torus fibration, which are mirror to the powers $\EuO(r)$ of the ample sheaf $\EuO(1)$ on the mirror variety (which split-generate the derived category of coherent sheaves).  
Assuming the Fukaya category can be shown to satisfy the properties axiomatised in \S \ref{sec:fuk} in this case, and that the sketch proof of core HMS can be turned into an actual proof, our arguments should complete this core HMS result to a full proof of homological mirror symmetry for Calabi-Yau Gross-Siebert pairs.

\subsection{Acknowledgments}

The authors are grateful to David Ben-Zvi, Sheel Ganatra and Nicholas Katz for helpful conversations. 
N.S. is grateful to the IAS and the Instituto Superior T\'{e}cnico for hospitality while working on this project.

\section{The Fukaya category}
\label{sec:fuk}

Let $X$ and $\BbK$ be as in \S \ref{subsec:setup}.
We will give a list of properties that we need the Fukaya category $\EuF(X)$ to have in order for our results to work. 

\subsection{Quantum cohomology}
\label{subsec:qh}

Quantum cohomology $\QH^\bullet(X) := H^\bullet(X;\BbK)$ as a $\BbK$-vector space; the grading is the standard one. 
It is equipped with the graded, $\BbK$-linear quantum cup product $\star$, defined by counting pseudoholomorphic spheres $u\co \mathbb{CP}^1 \to X$, weighted by $q^{\omega(u)} \in \BbK$. 
It is associative and supercommutative, and the identity element $e \in H^\bullet(X;\BbK)$ is also an identity for $\star$. 
It is a Frobenius algebra with respect to the Poincar\'{e} pairing:
\[ \langle \alpha \star \beta,\gamma \rangle = \langle \alpha, \beta \star \gamma \rangle.\]

\begin{rmk}
These properties have been established for the quantum cohomology of a semipositive symplectic manifold in, for example, \cite{mcduffsalamon}.
\end{rmk}

\subsection{Fukaya category}
\label{subsec:fuk}

The Fukaya category $\EuF(X)$ is a $\Z$-graded, $\BbK$-linear, cohomologically unital, proper $A_\infty$ category (in particular, it has no curvature: $\mu^0 = 0$). 
Henceforth in this section we will abbreviate it by $\EuF$.

\begin{rmk}
When $X$ is a Liouville manifold, the Fukaya category was constructed in \cite{Seidel2008}. 
In the completely general case, a construction of the Fukaya category allowing for a single Lagrangian object was given in \cite{fooo}. 
\end{rmk}

\subsection{Closed-open string map}
\label{subsec:co}

There is a graded map of $\BbK$-algebras,
\[ \EuC\EuO \co \QH^\bullet(X) \to \HH^\bullet(\EuF).\]

For any object $L$ of $\EuF$, there is a map of $\BbK$-algebras $\HH^\bullet(\EuF) \to \mathrm{Hom}^\bullet_\EuF(L,L)$; composing $\EuC\EuO$ with this map yields a graded map of $\BbK$-algebras, which we denote by
\[ \EuC\EuO^0 \co \QH^\bullet(X) \to \mathrm{Hom}^\bullet_\EuF(L,L).\]
This map is unital (the map $\EuC\EuO$ ought also to be unital, but we don't need that).

\begin{rmk}
The idea that there should exist an algebra isomorphism between $\QH^\bullet(X)$ and $\HH^\bullet(\EuF)$ goes back to \cite{Kontsevich1994}. 
$\EuC\EuO$ was constructed (under the name $\mathfrak{q}_{1,k}$, and allowing only for a single Lagrangian) in \cite[\S 3.8]{fooo}. 
The conjecture that it ought to be an algebra homomorphism is mentioned in \cite[\S 6]{Fukaya2010d}.
When $X$ is a Liouville manifold, the construction of $\EuC\EuO$ (and the fact that it ought to be a homomorphism of Gerstenhaber algebras) was explained in \cite{Seidel2002}. 
In this context it is a map from symplectic cohomology to Hochschild cohomology of the wrapped Fukaya category (see also \cite{Ganatra2013}). 
\end{rmk}
 
\subsection{Open-closed string map}
\label{subsec:oc}

There is a graded map of $\QH^\bullet(X)$-modules
\[ \EuO\EuC \co \HH_\bullet(\EuF) \to \QH^{\bullet+n}(X),\]
where $\HH_\bullet(\EuF)$ acquires its $\QH^\bullet(X)$-module structure via the map $\EuC\EuO$, and its natural $\HH^\bullet(\EuF)$-module structure.

\begin{rmk}
$\EuO\EuC$ was constructed (under the name $\mathfrak{p}$, and allowing only for a single Lagrangian) in \cite[\S 3.8]{fooo}. 
When $X$ is a Liouville manifold, $\EuO\EuC$ was constructed (from Hochschild homology of the wrapped Fukaya category to symplectic cohomology) in \cite{Abouzaid2010a}. 
It was proved to be a module homomorphism in the same setting in \cite{Ganatra2013} and (in the convex monotone case) in \cite{Ritter2012}. 
\end{rmk}

\subsection{Weak proper Calabi-Yau structure}
\label{subsec:wcyc}

We define $[\phi] \in \HH_n(\EuF)^\vee$ by
\[ [\phi](\alpha) := \langle \EuO\EuC(\alpha), e \rangle.\]
$[\phi]$ is an \emph{$n$-dimensional weak proper Calabi-Yau structure} on $\EuF$: that is, the pairing
\[ \mathrm{Hom}^\bullet_\EuF(K,L) \otimes \mathrm{Hom}^{n-\bullet}_\EuF(L,K) \overset{[\mu^2]}{\to} \mathrm{Hom}^n_\EuF(K,K) \to \HH_n(\EuF) \overset{[\phi]}{\to} \BbK \]
is non-degenerate. 
As a consequence, it induces an isomorphism
\begin{equation}
\label{eqn:hhdual}
\HH^\bullet(\EuF) \to \HH_\bullet(\EuF)^\vee[-n]
\end{equation}
that sends $\alpha \mapsto [\phi] \cap \alpha$ (see, e.g., \cite[Lemma A.2]{Sheridan2013}). 
 
The closed-open and open-closed string maps respect the induced duality map \eqref{eqn:hhdual}, in the sense that the following diagram commutes:
\[ \xymatrixcolsep{5pc}\xymatrix{
	\QH^\bullet(X) \ar[r]^-{\alpha \mapsto \langle \alpha,-\rangle} \ar[d]^{\EuC\EuO} & \QH^\bullet(X)^\vee[-2n] \ar[d]^{\EuO\EuC^\vee} \\
	\HH^\bullet(\EuF) \ar[r]^-{\eqref{eqn:hhdual}} & \HH_\bullet(\EuF)^\vee[-n].} \]

\begin{rmk}
There is a notion of `strict cyclicity' of an $A_\infty$ category, which is strictly stronger than a weak proper Calabi-Yau structure; the Fukaya endomorphism $A_\infty$ algebra of a single Lagrangian was shown to be strictly cyclic in \cite{Fukaya2010}. 
The construction of the weak proper Calabi-Yau structure $[\phi]$ was outlined for the exact Fukaya category in \cite[\S 12j]{Seidel2008}, see also \cite[\S 5]{Seidel2010c}.
\end{rmk}

\subsection{Coproduct}
\label{subsec:cop}

Let $\mathcal{Y}^l_K$ denote the left Yoneda module over $\EuF$ corresponding to an object $K$, let $\mathcal{Y}^r_K$ denote the right Yoneda module over $\EuF$ corresponding to $K$, and let $\EuF_\Delta$ denote the diagonal $(\EuF,\EuF)$ bimodule. 
The coproduct is a morphism of $(\EuF,\EuF)$ bimodules,
\[\Delta \co \EuF_\Delta \to \mathcal{Y}^l_K \otimes_\BbK \mathcal{Y}^r_K.\]

\begin{rmk}
When $X$ is a Liouville manifold, the coproduct was constructed (for the wrapped Fukaya category) in \cite{Abouzaid2010a}. 
\end{rmk}

\subsection{Cardy relation}
\label{subsec:cardy}

The diagram
\[ \xymatrix{
	\HH_\bullet(\EuF)[n] \ar[r]^-{\EuO\EuC} \ar[d]^-{\HH_\bullet(\Delta)} & \QH^\bullet(X) \ar[d]^-{\EuC\EuO^0} \\
	\HH_\bullet(\mathcal{Y}^l_K \otimes_\BbK \mathcal{Y}^r_K) \ar[r]^-{H^*(\mu)} & \mathrm{Hom}^\bullet_\EuF(K,K) } \]
commutes up to a sign $(-1)^{n(n+1)/2}$.

\begin{rmk}
When $X$ is a Liouville manifold, this version of the Cardy relation was proved (for the wrapped Fukaya category) in \cite{Abouzaid2010a}.
\end{rmk}

\subsection{Kodaira--Spencer maps}
\label{subsec:defmap}

We recall the definition of the \emph{categorical Kodaira--Spencer map} for a $\BbK$-linear $A_\infty$ category $\EuC$: 
\begin{eqnarray*}
\mathsf{KS}_{cat} \co \deriv_\Bbbk \BbK & \to & \HH^2(\EuC)\\
\mathsf{KS}_{cat}(\xi) & := & \xi(\mu^*),
\end{eqnarray*}
where $\mu^*$ denotes the $A_\infty$ structure maps, written with respect to a choice of $\BbK$-basis for each morphism space in $\EuC$ (see \cite[\S 3.5]{Sheridan2015a}; this class is closely related to the \emph{Kaledin class} \cite{Kaledin2007,Lunts2010b}).

We have
\begin{equation} 
\label{eqn:COKS} 
\EuC\EuO ([\omega])  = \mathsf{KS}_{cat}(q \partial_q) \in \HH^2(\EuF),
\end{equation}
where $[\omega] \in \QH^2(X)$ is the class of the symplectic form.

\section{Split-generation}

\subsection{Abouzaid's argument}
\label{subsec:gen}

Assume that the results of Sections \ref{subsec:qh}, \ref{subsec:fuk}, \ref{subsec:co}, \ref{subsec:oc}, \ref{subsec:cop} and \ref{subsec:cardy} hold. 

\begin{thm}[Theorem \ref{thm:abouzaid}] 
Let $\EuA$ be a full subcategory of $\EuF(X)$. 
If the identity $e \in \QH^0(X)$ lies in the image of the map
\begin{equation}
\label{eqn:oc02}
\EuO\EuC|_\EuA \co \HH_{-n}(\EuA) \to \QH^0(X),
\end{equation}
then $\EuA$ split-generates $\EuF(X)$.
\end{thm}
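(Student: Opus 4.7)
The strategy is to apply the Cardy relation of \S\ref{subsec:cardy} with $\EuA \subset \EuF(X)$ in the top row, in order to factor the identity of every object $K$ of $\EuF(X)$ through an object built from $\EuA$. Fix such a $K$. The Cardy diagram reads
\[ \xymatrix{
	\HH_\bullet(\EuA)[n] \ar[r]^-{\EuO\EuC} \ar[d]_-{\HH_\bullet(\Delta)} & \QH^\bullet(X) \ar[d]^-{\EuC\EuO^0} \\
	\HH_\bullet(\mathcal{Y}^l_K \otimes_\BbK \mathcal{Y}^r_K) \ar[r]^-{H^*(\mu)} & \mathrm{Hom}^\bullet_{\EuF}(K,K), } \]
where the Yoneda modules are restricted along $\EuA \hookrightarrow \EuF$, and the square commutes up to the sign $(-1)^{n(n+1)/2}$.

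Pick $\alpha \in \HH_{-n}(\EuA)$ with $\EuO\EuC(\alpha) = e$, which exists by hypothesis. Chasing $\alpha$ clockwise produces $\EuC\EuO^0(e) = \mathrm{id}_K$ in $\mathrm{Hom}^0_{\EuF}(K,K)$, by the unitality of $\EuC\EuO^0$ recorded in \S\ref{subsec:co}. Chasing $\alpha$ counterclockwise therefore produces an element $\beta := \HH_\bullet(\Delta)(\alpha)$ whose image $H^*(\mu)(\beta)$ equals $\pm \mathrm{id}_K$. In particular, $\mathrm{id}_K$ lies in the image of $H^*(\mu)$ on cohomology.

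Next I would interpret this factorization as split-generation. Via the standard identification $\HH_\bullet(\mathcal{Y}^l_K \otimes_\BbK \mathcal{Y}^r_K) \cong \mathcal{Y}^r_K \otimes^{\BbL}_{\EuA} \mathcal{Y}^l_K$, a cycle representing $\beta$ in a bar-type complex has leading tensor factors of the form $(g_i, f_i)$ with $f_i \in \mathrm{Hom}_{\EuF}(K,L_i)$, $g_i \in \mathrm{Hom}_{\EuF}(L_i,K)$, and $L_i$ an object of $\EuA$; the map $H^*(\mu)$ sends such a cycle to the $A_\infty$ composition obtained by multiplying these tensor factors together via the higher products $\mu^d$ of $\EuF$. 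Any such cycle packages canonically into a twisted complex $T$ over $\EuA$, built from the $L_i$ with differential encoding the higher bar components of $\beta$, together with morphisms $F \colon K \to T$ and $G \colon T \to K$ in $\twsplit \EuF$ such that $[G \circ F] = [\mathrm{id}_K]$. This exhibits $K$ as a summand of $T \in \twsplit \EuA$, so $K$ lies in the split-closed triangulated envelope of $\EuA$. Since $K$ was arbitrary, $\EuA$ split-generates $\EuF(X)$.

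The main obstacle is the passage from the Hochschild-homology cycle $\beta$ to an explicit twisted-complex retract: one must extract the differential of $T$ and the components of $F$ and $G$ from the bar representative of $\beta$, and produce the $A_\infty$ homotopy witnessing $G \circ F \sim \mathrm{id}_K$. All $A_\infty$ higher terms in the bar differential and in the map $H^*(\mu)$ must be tracked, and the required $A_\infty$ identities for $T$, $F$, and $G$ must be verified using the cycle condition on $\beta$. This bookkeeping is the technical heart of Abouzaid's argument \cite{Abouzaid2010a}.
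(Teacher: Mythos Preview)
Your proposal is correct and follows precisely the approach of Abouzaid \cite{Abouzaid2010a}, which is exactly what the paper does: its own proof consists of the single sentence ``The proof is identical to that of \cite[Theorem 1.1]{Abouzaid2010a}.'' Your sketch spells out that argument---the Cardy relation combined with the twisted-complex retract construction---in more detail than the paper, but the route is the same.
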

\begin{proof}
The proof is identical to that of \cite[Theorem 1.1]{Abouzaid2010a}.
\end{proof}

Now assume that the results of \S \ref{subsec:wcyc} hold. 

\begin{thm}[Theorem \ref{thm:coinj}] 
Let $\EuA$ be a full subcategory of $\EuF(X)$. 
If the map
\begin{equation}
\label{eqn:co2n}
 \EuC\EuO|_\EuA \co \QH^{2n}(X) \to \HH^{2n}(\EuA)
\end{equation}
is injective, then $\EuA$ split-generates $\EuF(X)$.
\end{thm}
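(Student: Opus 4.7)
The plan is to reduce Theorem \ref{thm:coinj} to Theorem \ref{thm:abouzaid} via the weak proper CY structure from \S\ref{subsec:wcyc}. The key observation is that $[\phi]$ descends to a weak proper CY structure $[\phi_\EuA] := [\phi]\circ \iota$ on the full subcategory $\EuA$, where $\iota \co \HH_\bullet(\EuA) \to \HH_\bullet(\EuF(X))$ is the natural map: indeed, the non-degenerate pairing $\mathrm{Hom}^\bullet_\EuF(K,L) \otimes \mathrm{Hom}^{n-\bullet}_\EuF(L,K) \to \BbK$ defining the weak CY structure depends only on morphism spaces and compositions between objects of $\EuA$, which are unchanged under a full-subcategory inclusion. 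By \cite[Lemma A.2]{Sheridan2013}, the induced duality map
\[ D_\EuA \co \HH^\bullet(\EuA) \to \HH_\bullet(\EuA)^\vee[-n] \]
is therefore an isomorphism.

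Next, by naturality of the cap product (the projection formula relating $r \co \HH^\bullet(\EuF(X)) \to \HH^\bullet(\EuA)$ and $\iota$), the commutative square of \S\ref{subsec:wcyc} restricts to a commutative square
\[ \xymatrixcolsep{5pc}\xymatrix{
\QH^\bullet(X) \ar[r]^-{\alpha \mapsto \langle \alpha, -\rangle} \ar[d]_-{\EuC\EuO|_\EuA} & \QH^\bullet(X)^\vee[-2n] \ar[d]^-{(\EuO\EuC|_\EuA)^\vee} \\
\HH^\bullet(\EuA) \ar[r]^-{D_\EuA} & \HH_\bullet(\EuA)^\vee[-n]
} \]
in which both horizontal arrows are isomorphisms (the top by Poincar\'e duality on $X$, the bottom by the preceding paragraph).

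Specializing to degree $2n$ and using that $\QH^0(X) = \BbK\cdot e$ is one-dimensional: since the horizontals are isomorphisms, injectivity of $\EuC\EuO|_\EuA \co \QH^{2n}(X) \to \HH^{2n}(\EuA)$ is equivalent to injectivity of $(\EuO\EuC|_\EuA)^\vee \co \QH^0(X)^\vee \to \HH_{-n}(\EuA)^\vee$, equivalently to surjectivity of $\EuO\EuC|_\EuA \co \HH_{-n}(\EuA) \to \QH^0(X)$, which (as $\QH^0(X)$ is one-dimensional) is in turn equivalent to $e \in \mathrm{Im}(\EuO\EuC|_\EuA)$. Theorem \ref{thm:abouzaid} then gives split-generation.

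The only nontrivial step is the first one -- the inheritance of the weak proper CY structure by the full subcategory $\EuA$, and more precisely the commutativity of the restricted square. This is where I expect the main (modest) obstacle to lie: one must unwind the definition of $[\phi]$ to see that the defining pairing is intrinsic to morphism spaces, and verify naturality of the cap product under the subcategory inclusion (so that $D_\EuA \circ r = \iota^\vee \circ D$). Both are formal once properly set up, after which the theorem follows from a direct diagram chase.
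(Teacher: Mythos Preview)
Your proposal is correct and follows essentially the same argument as the paper: use the weak proper Calabi--Yau duality of \S\ref{subsec:wcyc} to see that $\EuC\EuO|_\EuA$ in degree $2n$ is dual to $\EuO\EuC|_\EuA$ in degree $-n$, conclude surjectivity of the latter from injectivity of the former, and apply Theorem~\ref{thm:abouzaid}. If anything, you are more careful than the paper, which compresses the restriction step (the fact that the duality square for $\EuF$ descends to one for the full subcategory $\EuA$) into the single phrase ``\eqref{eqn:co2n} is dual to \eqref{eqn:oc02}''; your explicit verification via $[\phi_\EuA]=[\phi]\circ\iota$ and the projection formula is exactly what underlies that phrase.
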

\begin{proof}
By \S \ref{subsec:wcyc}, \eqref{eqn:co2n} is dual to \eqref{eqn:oc02}. 
In particular, if the former is injective, the latter is surjective, hence contains the identity $e$ in its image. 
The result follows by Theorem \ref{thm:abouzaid}.
\end{proof}

\subsection{The Kodaira--Spencer map and split-generation}

Now assume that the results of \S \ref{subsec:defmap} hold. 

\begin{defn}
\label{defn:npot}
We say that a $\BbK$-linear $A_\infty$ category $\EuA$ is \emph{$n$-potent} if 
\[ \mathsf{KS}_{cat}(\partial_q)^{\cup n} \neq 0,\]
where $\cup$ denotes the Yoneda product on $\HH^\bullet(\EuA)$.
\end{defn}

\begin{thm}
\label{thm:npotgen}
Let $X$ be as in \S \ref{subsec:setup}: connected, Calabi-Yau, and $2n$-dimensional; and let $\EuA \subset \EuF(X)$ be a full subcategory. 
If $\EuA$ is $n$-potent, then $\EuA$ split-generates $\EuF(X)$.
\end{thm}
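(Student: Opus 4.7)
My plan is to deduce Theorem \ref{thm:npotgen} from the closed-open split-generation criterion, Theorem \ref{thm:coinj}. So given that $\EuA$ is $n$-potent, I would aim to show that the restricted closed-open map $\EuC\EuO|_\EuA \co \QH^{2n}(X) \to \HH^{2n}(\EuA)$ is injective. The crucial simplification is that $X$ is a compact, connected, $2n$-dimensional symplectic manifold, so $\QH^{2n}(X) = H^{2n}(X;\BbK)$ is one-dimensional over $\BbK$; injectivity then reduces to exhibiting a single nonzero class in $\QH^{2n}(X)$ whose image under $\EuC\EuO|_\EuA$ is nonzero.

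The natural candidate is $[\omega]^{\star n}$. First I need to know it is nonzero in $\QH^{2n}(X)$: this is because the quantum product deforms the classical cup product by contributions weighted by strictly positive powers of $q$ (nonconstant holomorphic spheres have positive symplectic area), so $[\omega]^{\star n}$ agrees with the classical class $[\omega]^n$ to leading order in $q$, and $[\omega]^n$ is nonzero by nondegeneracy of $\omega$; equivalently, the Frobenius pairing gives $\langle [\omega]^{\star n}, e \rangle = \int_X \omega^n + O(q^\epsilon) > 0$. Next, I would compute its image. Since $\EuC\EuO$ is a $\BbK$-algebra map (\S\ref{subsec:co}), and since restriction to $\EuA$ preserves both the algebra structure and the categorical Kodaira--Spencer class, iterating the relation \eqref{eqn:COKS} gives
\[ \EuC\EuO|_\EuA([\omega]^{\star n}) \;=\; \mathsf{KS}_{cat}(q \partial_q)^{\cup n} \;=\; q^n \cdot \mathsf{KS}_{cat}(\partial_q)^{\cup n} \;\in\; \HH^{2n}(\EuA), \]
where the second equality uses the tautological $\BbK$-linearity $\mathsf{KS}_{cat}(f\xi) = f \cdot \mathsf{KS}_{cat}(\xi)$, visible at the cochain level from the definition $\xi \mapsto \xi(\mu^*)$. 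The $n$-potence hypothesis together with invertibility of $q$ in $\BbK$ then ensures that this class is nonzero, so $\EuC\EuO|_\EuA$ is injective in degree $2n$, and Theorem \ref{thm:coinj} yields split-generation.

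Apart from unpacking the categorical definitions, the only genuinely nontrivial input is the leading-order statement that $[\omega]^{\star n} \neq 0$ in $\QH^{2n}(X)$: depending on the ambient Novikov conventions (e.g.\ whether $R$ is discrete or dense in $\R$) this might warrant a short $q$-adic valuation argument, but it is ultimately just the statement that quantum cohomology is a deformation of classical cohomology by strictly positive-area terms, with the classical top power nonvanishing by nondegeneracy of $\omega$. Everything else---the algebra-map property of $\EuC\EuO$, the identity \eqref{eqn:COKS}, the $\BbK$-linearity of $\mathsf{KS}_{cat}$, and the compatibility with restriction from $\EuF(X)$ to $\EuA$---is formal and follows directly from the axioms laid out in Sections \ref{subsec:qh}, \ref{subsec:co}, and \ref{subsec:defmap}.
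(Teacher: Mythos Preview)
Your proof is correct and follows essentially the same route as the paper's. One minor streamlining: you need not separately argue that $[\omega]^{\star n} \neq 0$ in $\QH^{2n}(X)$, since this follows automatically once you know its image under the $\BbK$-linear map $\EuC\EuO|_\EuA$ is nonzero---and then, $\QH^{2n}(X)$ being one-dimensional, injectivity is immediate.
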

\begin{proof}
By the results of \S \ref{subsec:defmap},
\[ \EuC\EuO([\omega]) =  \mathsf{KS}_{cat}(q\partial_q) \in \HH^2(\EuA) .\]
Because $\EuC\EuO$ is an algebra homomorphism by \S \ref{subsec:co}, 
\begin{equation}
\label{eqn:coomegan}
 \EuC\EuO\left([\omega]^{\star n} \right) = \mathsf{KS}_{cat}(q\partial_q)^{\cup n} \in \HH^{2n}(\EuA).
\end{equation}
Because $\EuA$ is $n$-potent, this class is non-zero. 
Because $X$ is connected and $2n$-dimensional, $\QH^{2n}(X)$ has rank $1$, so the fact that \eqref{eqn:coomegan} is non-zero implies that \eqref{eqn:co2n} is injective. 
The result follows by Theorem \ref{thm:coinj}.
\end{proof}

\begin{rmk}
Theorem \ref{thm:npotgen} does not hold if we violate our standing assumption that $\EuF(X)$ is $\Z$-graded, which can only be expected when $X$ is Calabi-Yau (for example, when $X$ is monotone, the best one can hope for is that the grading group is $\Z/2N$). 
That is because the proof crucially uses the fact that $\QH^{2n}$ is $1$-dimensional, which need not hold for other grading groups. 
However, Theorem \ref{thm:coinj} may still be applied (compare \cite{Sheridan2013}).
\end{rmk}

\section{Hochschild--Kostant--Rosenberg}

\subsection{The HKR isomorphism}

Let $Y \to \cM$ be as in \S \ref{subsec:setup}. 
The \emph{tangential cohomology} of $Y$ is defined to be the cohomology of the sheaf of polyvector fields:
\[ HT^\bullet(Y) := \bigoplus_{p+q = \bullet}H^p(Y,\wedge^q \, \mathcal{T}Y);\]
it is a graded $\BbK$-algebra, via wedge product of polyvector fields. 
Swan \cite{Swan1996} defines the Hochschild cohomology of $Y$ to be
\[ \HH^\bullet(Y) := \ext^\bullet_{Y \times Y}(\Delta_* \mathcal{O}_Y,\Delta_* \mathcal{O}_Y),\]
where $\Delta \co Y \to Y \times Y$ is the diagonal embedding. 
It is a graded $\BbK$-algebra, via the Yoneda product. 

The Hochschild--Kostant--Rosenberg isomorphism \cite{Hochschild1962,Gerstenhaber1988b,Swan1996,Yekutieli2002} is an
explicit quasi-isomorphism
\[ \Delta\!^* \Delta_* \mathcal{O}_Y \to \bigoplus_q \Omega^q_Y[q],\]
which induces an isomorphism
\begin{equation}
\label{eqn:hkr}
 \HKR \co HT^\bullet(Y) \to \HH^\bullet(Y)
\end{equation}
(see \cite[Corollary 4.2]{Caldararu2005}).

There is also an isomorphism \cite{Lowen2005,Toen2006a}
\begin{equation}
\label{eqn:hhcats}
 \HH^\bullet(Y) \cong \HH^\bullet(\dbdg{Y}).
\end{equation}
Composing the two yields an isomorphism
\[ \HKR_{cat}:HT^\bullet(Y) \to \HH^\bullet(\dbdg{Y}),\]
which we call the \emph{categorical HKR isomorphism}.

\subsection{The categorical HKR map and deformation theory}

We have the classical Kodaira--Spencer map (we recall the definition in Section \ref{subsec:schdef}) 
\[ \mathsf{KS}_{class} \co \deriv_\Bbbk \BbK \to H^1(Y, \mathcal{T} Y) \subset HT^2(Y),\]
and the categorical Kodaira--Spencer map \cite[\S 3.5]{Sheridan2015a}
\[ \mathsf{KS}_{cat} \co \deriv_\Bbbk \BbK \to \HH^2(\dbdg{Y}).\]

These classes are related in the expected way:

\begin{prop}
\label{prop:KSHKR}
We have 
\[ \HKR_{cat} \circ \mathsf{KS}_{class} = \mathsf{KS}_{cat}.\]
\end{prop}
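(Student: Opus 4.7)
My plan is to identify both $\mathsf{KS}_{class}(\xi)$ and $\HKR_{cat}^{-1}(\mathsf{KS}_{cat}(\xi))$ with the class classifying a common first-order deformation of $Y$, and then appeal to naturality of the HKR and Lowen--Van den Bergh--To\"en identifications with respect to that deformation. Concretely, $\xi \in \deriv_\Bbbk \BbK$ determines a ring map $\BbK \to \BbK[\epsilon]/(\epsilon^2)$, $a \mapsto a + \xi(a)\epsilon$, and hence a first-order thickening $\cM_\epsilon = \spec \BbK[\epsilon]/(\epsilon^2)$ of $\cM$. By the definition recalled in \S\ref{subsec:schdef}, the classical class $\mathsf{KS}_{class}(\xi) \in H^1(Y,\mathcal{T}Y)$ is the obstruction to a global trivialisation of $Y \times_\cM \cM_\epsilon \to Y$; by the definition in \cite[\S 3.5]{Sheridan2015a}, the categorical class $\mathsf{KS}_{cat}(\xi) \in \HH^2(\dbdg{Y})$ is the obstruction to globally trivialising the $A_\infty$ category whose structure constants are obtained by applying $\xi$ to those of $\dbdg{Y}$. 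Both therefore measure the first-order variation of the same object $\dbdg{Y}$ along $\xi$, and the content of the proposition is that the identifications \eqref{eqn:hkr} and \eqref{eqn:hhcats} are compatible with this deformation.

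To make this precise, pick an affine cover $\{U_i = \spec A_i\}$ of $Y$ with gluing isomorphisms $\phi_{ij}\colon A_j|_{U_{ij}} \to A_i|_{U_{ij}}$, and represent $\dbdg{Y}$ as a dg category whose objects are bounded complexes of locally free sheaves and whose $\mathrm{Hom}$-complexes are computed by a \v{C}ech resolution on this cover. Both classes now admit explicit cocycle representatives. On the classical side, $\mathsf{KS}_{class}(\xi)$ is the \v{C}ech $1$-cocycle $\{\xi(\phi_{ij})\circ\phi_{ij}^{-1}\}$ with values in $\mathcal{T}Y$. On the categorical side, each $A_\infty$ structure constant of the \v{C}ech-dg enhancement is a polynomial expression in the $\phi_{ij}$ and the restriction/composition maps; applying $\xi$ produces a Hochschild $2$-cocycle whose restriction to structure sheaves $\mathcal{O}_{U_i}$ is controlled by the same derivations $\xi(\phi_{ij})\circ\phi_{ij}^{-1}$ acting on sections, with all higher bar components determined polynomially by the \v{C}ech combinatorics.

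The remaining step is to verify that the composite of \eqref{eqn:hkr} and \eqref{eqn:hhcats} sends the former cocycle to the latter, and here lies the principal obstacle: the identification \eqref{eqn:hhcats} is defined abstractly via a derived Morita equivalence between sheaves on $Y \times Y$ and bimodules over $\dbdg{Y}$, so one must work harder to make it concrete on a common model. Two routes present themselves. The first is to use Lowen--Van den Bergh's explicit sheafy bar resolution of $\Delta_*\mathcal{O}_Y$, on which the HKR quasi-isomorphism is realised by antisymmetrisation of bar cochains and the comparison with the \v{C}ech-dg enhancement is polynomial in the $\phi_{ij}$; the identification then reduces to a direct computation. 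The second, cleaner, route is to appeal to a universal property: since $\mathsf{KS}_{cat}$ coincides with the Kaledin class \cite{Kaledin2007,Lunts2010b}, and both the Kaledin class and $\HKR_{cat} \circ \mathsf{KS}_{class}$ are natural in base change along $\deriv_\Bbbk \BbK$, it suffices to check the equality for the tautological derivation of a universal first-order deformation of a smooth projective Calabi--Yau scheme, where it is the known compatibility of HKR with first-order deformations.
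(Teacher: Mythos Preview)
Your outline is correct in spirit and your first route is, in broad strokes, what the paper does---but you stop exactly where the work begins. Saying ``the identification then reduces to a direct computation'' conceals the entire content of the proposition. The paper makes this precise by routing the comparison through Gerstenhaber--Schack's diagram formalism rather than a \v{C}ech--dg model: for the chosen affine cover one forms the diagram of $\BbK$-algebras $\algd$ and factors $\HKR_{cat}$ as
\[
HT^\bullet(Y)\xrightarrow{\HKR}\ext^\bullet_{Y\times Y}(\Delta_*\mathcal{O}_Y,\Delta_*\mathcal{O}_Y)\to\ext^\bullet_{\algd\bimod}(\algd,\algd)\to\ext^\bullet_{\algd!\bimod}(\algd!,\algd!)\to\pmb{\mathsf{H}}^\bullet_{\mathrm{ab}}(\algd\modd)\to\HH^\bullet(\dbdg{Y}),
\]
defines a deformation class at each node, and checks that every arrow preserves it. Two ingredients do real work that your sketch omits. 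First, the Bezrukavnikov--Ginzburg factorisation of the deformation class as $(\text{Kodaira--Spencer})\circ(\text{Atiyah})$ lets one represent it as an explicit $2$-extension, so that Swan's functor $Coh(Y\times Y)\to\algd\bimod$ visibly carries the geometric $2$-extension to the algebraic one, and C\u{a}ld\u{a}raru's formula $\HKR(\alpha)=\Delta_*\alpha\circ\At_Y$ handles the first arrow. Second, on the diagram side one needs an explicit cocycle in the Gerstenhaber--Schack generalised simplicial bar complex, with components $\xi(m_{\algd^i})$ on $0$-simplices \emph{and} $\xi(\varphi^v)$ on $1$-simplices; your \v{C}ech cocycle $\{\xi(\phi_{ij})\circ\phi_{ij}^{-1}\}$ gestures at the latter but drops the former, and you do not exhibit the chain map matching the two sides.

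Your second route has a genuine gap: invoking ``the known compatibility of HKR with first-order deformations'' for a putative universal family is circular, since that compatibility is precisely the statement of the proposition. Naturality of both $\mathsf{KS}_{cat}$ and $\HKR_{cat}\circ\mathsf{KS}_{class}$ in $\xi$ only tells you that both sides are $\BbK$-linear maps $\deriv_\Bbbk\BbK\to\HH^2$; it does not force them to agree, and there is no universal smooth projective Calabi--Yau over which the check becomes easier than in general. If you want to salvage this route you would need an independent characterisation of $\HKR_{cat}$ restricted to $H^1(\mathcal{T}Y)$ by a universal property that $\mathsf{KS}_{cat}\circ\mathsf{KS}_{class}^{-1}$ also satisfies, and no such characterisation is on offer.
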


We were not able to locate a proof of this result in the literature, although the statement will surprise no-one: we present a proof in Appendix \ref{app:kshkr}.

\subsection{The twisted HKR map}

This isomorphism $\HKR$ does not respect the algebra structures: this can be `corrected' by twisting by the square root of the Todd class. 
Thus, one defines
\begin{eqnarray*}
I^* \co HT^\bullet(Y) & \to & \HH^\bullet(Y) \\
I^*(\alpha) &:= & \mathsf{HKR}\left(\mathrm{td}_Y^{1/2} \wedge \alpha \right).
\end{eqnarray*}
The map $I^*$ respects the algebra structure (see \cite[Corollary 1.5]{Calaque2010}; this result was first claimed in \cite[Section 8.4]{Kontsevich2003}, see also \cite[Claim 5.1]{Caldararu2005}). 

The isomorphism \eqref{eqn:hhcats} respects the algebra structure: so composing it with $I^*$ yields an algebra isomorphism
\[ I^*_{cat} \co HT^\bullet(Y) \to \HH^\bullet(\dbdg{Y});\]
this should be regarded as the mirror to the closed--open map $\EuC\EuO$.

\begin{cor}
\label{cor:ytriv}
When $Y$ has trivial canonical sheaf, 
\begin{equation}
\label{eqn:KSIcat}
I^*_{cat} \circ \KS_{class} = \KS_{cat}.
\end{equation}
\end{cor}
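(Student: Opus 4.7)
The plan is to reduce Corollary~\ref{cor:ytriv} to Proposition~\ref{prop:KSHKR} by showing that the Todd twist in the definition of $I^*$ has no effect on the image of $\KS_{class}$. Unwinding the definition, $I^*_{cat}(\KS_{class}(\xi)) = \HKR_{cat}\bigl(\mathrm{td}_Y^{1/2} \wedge \KS_{class}(\xi)\bigr)$, whereas Proposition~\ref{prop:KSHKR} already provides $\HKR_{cat}(\KS_{class}(\xi)) = \KS_{cat}(\xi)$. Since $\HKR_{cat}$ respects the total grading on $HT^\bullet(Y)$ and the class $\KS_{class}(\xi) \in H^1(Y,\mathcal{T}Y)$ sits in total degree $2$, it suffices to establish
\[ \bigl(\mathrm{td}_Y^{1/2} \wedge \KS_{class}(\xi)\bigr)\Big|_{HT^2(Y)} \;=\; \KS_{class}(\xi).\]

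Next I would expand
\[ \mathrm{td}_Y^{1/2} \;=\; 1 \;+\; \tfrac{1}{4}\, c_1(Y) \;+\; \sum_{k \geq 2} \tau_k, \qquad \tau_k \in H^k(Y, \Omega^k_Y),\]
and examine each term's contribution after pairing with the bidegree-$(1,1)$ polyvector class $\KS_{class}(\xi)$. The hypothesis that $K_Y$ is trivial gives $c_1(Y) = 0$, eliminating the only potential correction of total degree $2$. For the remaining components $\tau_k$ with $k \geq 2$, the wedge (interpreted as contraction of forms against polyvectors, combined with the Calabi--Yau identification $\Omega^p \cong \wedge^{n-p}\mathcal{T}Y$) outputs classes of total $HT$-degree $\geq n+2 > 2$, so they contribute nothing to the degree-$2$ part. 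Hence the degree-$2$ projection of $\mathrm{td}_Y^{1/2} \wedge \KS_{class}(\xi)$ is exactly $\KS_{class}(\xi)$, and combining with Proposition~\ref{prop:KSHKR} closes the argument.

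The main obstacle, and the step most deserving of care, is the bigrading bookkeeping in the wedge product $\mathrm{td}_Y^{1/2} \wedge \KS_{class}(\xi)$: one must fix a convention for how the Todd class (naturally valued in $\bigoplus_k H^k(\Omega^k_Y)$) acts on polyvector fields, compatible with the twist formula of \cite{Calaque2010, Caldararu2005}. Once this is pinned down the degree count is routine, and the vanishing of $c_1(Y)$ is the only geometric input needed.
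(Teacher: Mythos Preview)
Your approach is essentially the paper's: observe that the degree-$2$ component of $\mathrm{td}_Y^{1/2}$ is a multiple of $c_1(Y)$ and hence vanishes in the Calabi--Yau case, so $\HKR$ and $I^*$ agree on $H^1(\mathcal{T}Y)$, and then invoke Proposition~\ref{prop:KSHKR}. One small simplification: you do not need the Calabi--Yau identification $\Omega^p \cong \wedge^{n-p}\mathcal{T}Y$ to dispose of the $\tau_k$ with $k\ge 2$ --- whatever convention governs the action of $\mathrm{td}_Y^{1/2}$ on $HT^\bullet$, the component $\tau_k$ shifts total degree by $2k$, so for $k\ge 2$ it lands in degree $\ge 6$ and cannot contribute to $HT^2$.
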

\begin{proof}
When $Y$ has trivial canonical sheaf, the degree-$2$ component of $\mathrm{td}_Y^{1/2}$ vanishes; so the maps
\[ \mathsf{HKR},I^* \co H^1(\mathcal{T}Y) \to \HH^2(Y)\]
coincide. 
The result then follows immediately from Proposition \ref{prop:KSHKR}.
\end{proof}

\subsection{Proofs of Theorems \ref{gen} and \ref{nondeg}}
\label{subsec:pf}

Let $X$ and $Y$ be as in \S \ref{subsec:setup}, and $Y$ be maximally unipotent.

\begin{thm}[Theorem \ref{nondeg}]
If such $X$ and $Y$ are homologically mirror, then $\EuF(X)$ is non-degenerate.
\end{thm}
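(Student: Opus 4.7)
The plan is to reduce non-degeneracy to injectivity of $\EuC\EuO$ on $\QH^{2n}(X)$ via the weak proper Calabi--Yau duality, and then to verify that injectivity by transferring everything to the algebraic side via HMS, where the required non-vanishing becomes precisely the hypothesis of maximal unipotence.

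First, since $X$ is connected, $\QH^0(X) = \BbK \cdot e$ is one-dimensional, so $e$ lies in the image of $\EuO\EuC$ restricted to $\HH_{-n}(\EuF(X))$ if and only if that map is nonzero. The commutative square from \S \ref{subsec:wcyc}, in total degree $2n$, sandwiches $\EuO\EuC|_{\HH_{-n}}$ between $\EuC\EuO|_{\QH^{2n}(X)}$ and two isomorphisms (Poincar\'{e} duality on $X$, and the Hochschild duality \eqref{eqn:hhdual} induced by the weak proper CY structure). Since $\QH^{2n}(X)$ is also one-dimensional, the non-degeneracy of $\EuF(X)$ is thus equivalent to $\EuC\EuO\colon \QH^{2n}(X) \to \HH^{2n}(\EuF(X))$ being nonzero.

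Next, by \eqref{eqn:COKS} and the fact that $\EuC\EuO$ is a $\BbK$-algebra homomorphism,
\[ \EuC\EuO\bigl([\omega]^{\star n}\bigr) \;=\; \KS_{cat}(q\partial_q)^{\cup n} \;\in\; \HH^{2n}(\EuF(X)).\]
The quasi-equivalence $\twsplit \EuF(X) \cong \dbdg{Y}$ coming from HMS induces an isomorphism of graded $\BbK$-algebras $\HH^\bullet(\EuF(X)) \cong \HH^\bullet(\dbdg{Y})$ (Morita invariance) which sends the categorical Kodaira--Spencer class to itself. By Corollary \ref{cor:ytriv}, the transferred class on the $\dbdg{Y}$ side equals $I^*_{cat}(\KS_{class}(q\partial_q))$, and since $I^*_{cat}$ is an algebra isomorphism its $n$-th power equals $I^*_{cat}(\KS_{class}(q\partial_q)^{\wedge n})$. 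Maximal unipotence gives $\KS_{class}(\partial_q)^{\wedge n} \neq 0$ in $HT^{2n}(Y)$, so the whole composition is nonzero, whence $\EuC\EuO([\omega]^{\star n}) \neq 0$ and non-degeneracy follows.

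The main step to watch is Morita invariance of $\KS_{cat}$ and of the Yoneda product on $\HH^\bullet$, which is what allows the categorical Kodaira--Spencer class and its powers to be transported across the HMS equivalence $\twsplit \EuF(X) \cong \dbdg{Y}$; granting these expected properties of the construction in \cite{Sheridan2015a}, the rest of the argument is a chain of algebra-preserving isomorphisms built from the formulas already compiled in \S \ref{subsec:defmap} and in the HKR section.
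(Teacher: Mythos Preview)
Your proof is correct and follows essentially the same route as the paper: establish $n$-potency of $\dbdg{Y}$ from maximal unipotence via Corollary \ref{cor:ytriv} and the fact that $I^*_{cat}$ is an algebra isomorphism, transfer $n$-potency to $\EuF(X)$ by Morita invariance of $\HH^\bullet$ and of $\KS_{cat}$ across the HMS equivalence, then conclude non-degeneracy from nonvanishing of $\EuC\EuO$ in degree $2n$ using the duality of \S\ref{subsec:wcyc} and the one-dimensionality of $\QH^0(X)$ and $\QH^{2n}(X)$. The only cosmetic point is the silent passage between $\partial_q$ and $q\partial_q$: since $q$ is a unit in $\BbK$ and the relevant maps are $\BbK$-linear, $\KS_{class}(q\partial_q)^{\wedge n}=q^n\,\KS_{class}(\partial_q)^{\wedge n}$, so nonvanishing of one is equivalent to nonvanishing of the other.
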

\begin{pf}
Because $Y$ is maximally unipotent, $\KS_{class}(\partial_q)^n \neq 0$. 
Because $Y$ has trivial canonical sheaf by hypothesis, we have
\[ I^*_{cat}(\KS_{class}(\partial_q)) = \KS_{cat}(\partial_q)\]
by Corollary \ref{cor:ytriv}. 
Because $I^*_{cat}$ is an algebra isomorphism, it follows that
\[ 0 \neq I^*_{cat}\left(\KS_{class}(\partial_q)^{n}\right) = \KS_{cat}(\partial_q)^{\cup n}.\]
In particular, $\dbdg{Y}$ is $n$-potent (Definition \ref{defn:npot}). 

Because Hochschild cohomology and the categorical Kodaira--Spencer map are Morita invariant (see, e.g., \cite[\S 4.4]{Sheridan2015a} for Morita invariance of $\mathsf{KS}_{cat}$), it follows from homological mirror symmetry that $\EuF(X)$ is $n$-potent. 
As in the proof of Theorem \ref{thm:npotgen}, this implies that $\EuC\EuO$ is non-zero in degree $2n$, and hence (dually) that $\EuO\EuC$ is non-zero in degree $0$, and hence that the identity $e \in \QH^0(X)$ is in the image: so $\EuF(X)$ is non-degenerate.
\end{pf}

\begin{thm}[Theorem \ref{gen}]
Suppose that core HMS (Definition \ref{defn:corehms}) holds.  Then $\EuA$ split-generates $\EuF(X)$.
\end{thm}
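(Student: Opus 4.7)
The plan is to reduce the conclusion to Theorem \ref{thm:npotgen} by showing that $\EuA$ is $n$-potent, and to establish the latter by transporting $n$-potency from the mirror $\dbdg{Y}$ across the quasi-equivalence $\psi$. The argument runs parallel to the proof of Theorem \ref{nondeg} just given, so I would essentially reuse that computation on the $B$-side.

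First, I would observe that since $Y$ is maximally unipotent we have $\KS_{class}(\partial_q)^n \neq 0$ in $HT^{2n}(Y)$. Since $Y$ is Calabi--Yau (trivial canonical sheaf), Corollary \ref{cor:ytriv} gives
\[ I^*_{cat}(\KS_{class}(\partial_q)) = \KS_{cat}(\partial_q) \in \HH^2(\dbdg{Y}),\]
and because $I^*_{cat}$ is an algebra isomorphism, $\KS_{cat}(\partial_q)^{\cup n} \neq 0$ in $\HH^{2n}(\dbdg{Y})$. Thus $\dbdg{Y}$ is $n$-potent in the sense of Definition \ref{defn:npot}.

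Next, I would transfer $n$-potency first from $\dbdg{Y}$ to $\EuB$, and then from $\EuB$ to $\EuA$. Since $\EuB \subset \dbdg{Y}$ split-generates, Morita invariance of Hochschild cohomology identifies $\HH^\bullet(\dbdg{Y})$ with $\HH^\bullet(\EuB)$; and the categorical Kodaira--Spencer map is also Morita invariant (cf.\ \cite[\S 4.4]{Sheridan2015a}), so $\KS_{cat}(\partial_q)^{\cup n} \neq 0$ in $\HH^{2n}(\EuB)$. The quasi-equivalence $\psi\co \EuA \to \EuB$ induces an algebra isomorphism on Hochschild cohomology which intertwines the categorical Kodaira--Spencer maps (again by naturality, using that $\KS_{cat}$ is defined via the $A_\infty$ structure maps and is preserved under quasi-equivalences). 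It follows that $\KS_{cat}(\partial_q)^{\cup n} \neq 0$ in $\HH^{2n}(\EuA)$, i.e.\ $\EuA$ is $n$-potent. Applying Theorem \ref{thm:npotgen} gives that $\EuA$ split-generates $\EuF(X)$, and the final sentence of Main Theorem \ref{gen} (extension of $\psi$ to a quasi-equivalence $\twsplit \EuF(X) \to \dbdg{Y}$) follows formally from split-generation of $\EuA$ in $\EuF(X)$ and of $\EuB$ in $\dbdg{Y}$ together with $\psi$ being a quasi-equivalence on the generating subcategories.

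The one step requiring genuine care, rather than being purely bookkeeping, is the compatibility of $\KS_{cat}$ with passage to a split-generating full subcategory and with quasi-equivalence of $A_\infty$ categories. This is where the assumption that $\EuB$ split-generates $\dbdg{Y}$ (rather than merely sitting inside it) is crucial: otherwise $n$-potency of $\dbdg{Y}$ would not pass to $\EuB$. Once this Morita/quasi-equivalence invariance is in hand, the remainder of the proof is a direct assembly of results already in the paper (Corollary \ref{cor:ytriv}, maximal unipotence, and Theorem \ref{thm:npotgen}).
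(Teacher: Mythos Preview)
Your proposal is correct and follows essentially the same route as the paper: show $\dbdg{Y}$ is $n$-potent via Corollary \ref{cor:ytriv} and maximal unipotence, transfer $n$-potency to $\EuB$ by Morita invariance (using that $\EuB$ split-generates), then to $\EuA$ via the quasi-equivalence $\psi$, and conclude by Theorem \ref{thm:npotgen}. The paper's proof is terser (it simply refers back to the proof of Theorem \ref{nondeg} for the first step), but the logical structure is identical.
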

\begin{proof}
As in the proof of Theorem \ref{nondeg}, one proves that $\dbdg{Y}$ is $n$-potent.
Core HMS requires a split-generating subcategory $\EuB \subset \dbdg{Y}$. 
The restriction map $\HH^\bullet(\dbdg{Y}) \to \HH^\bullet(\EuB)$ is an isomorphism, by Morita invariance of Hochschild cohomology; so $\EuB$ is $n$-potent. 

Core HMS also requires a subcategory $\EuA \subset \EuF(X)$ that is quasi-equivalent to $\EuB$; by Morita invariance of the categorical Kodaira--Spencer map, it follows that $\EuA$ is $n$-potent. 
Hence, by Theorem \ref{thm:npotgen}, $\EuA$ split-generates $\EuF(X)$. 
\end{proof}

\appendix

\section{The Hochschild--Kostant--Rosenberg isomorphism and the Kodaira--Spencer map}
\label{app:kshkr}

The aim of this appendix is to prove the following:

\begin{prop}[Proposition \ref{prop:KSHKR}]
\label{prop:hkronks}
For any $\xi \in \deriv_\Bbbk \BbK$, the isomorphism 
\[ \HKR_{cat} \co HT^\bullet(Y) \to \HH^\bullet(\dbdg{Y}) \]
takes $\mathsf{KS}_{class}(\xi)$ to $\mathsf{KS}_{cat}(\xi)$.
\end{prop}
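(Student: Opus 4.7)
The plan is to verify the identity by reducing it to an explicit local calculation on an affine cover of $Y$, in a dg model where $\mathsf{KS}_{class}(\xi)$, $\mathsf{KS}_{cat}(\xi)$, and the HKR map all admit concrete \v{C}ech representatives, and then to match those representatives directly. First I fix a finite affine open cover $\mathfrak{U} = \{U_i = \spec A_i\}$ of $Y$ and work with a dg enhancement of $D^bCoh(Y)$ built from $\mathfrak{U}$ -- for instance Block's twisted-complex model, or a \v{C}ech enhancement in the sense of To\"{e}n--Vaqui\'{e} -- whose morphism complexes are assembled degreewise from pieces of the form $A_{i_0 \cdots i_p} \otimes_\BbK \mathrm{Hom}_\BbK(M_i, N_j)$ for chosen local resolutions $M_i, N_j$. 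This is the arena in which all computations take place, and in particular all three objects in the proposition will have combinatorial descriptions on it.

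Next I present both sides as \v{C}ech cocycles. Since $Y$ is smooth over $\BbK$, each $\xi \in \deriv_\Bbbk \BbK$ lifts, after possibly refining $\mathfrak{U}$, to a $\Bbbk$-derivation $\tilde\xi_i \co A_i \to A_i$ extending $\xi$. On overlaps $\tilde\xi_i - \tilde\xi_j$ is a $\BbK$-linear derivation of $A_{ij}$, i.e.\ a section of $\mathcal{T}Y$ over $U_{ij}$, and the resulting \v{C}ech $1$-cocycle $c(\xi) \in \check{C}^1(\mathfrak{U}, \mathcal{T}Y)$ represents $\mathsf{KS}_{class}(\xi)$ by its classical definition (cf.\ \S \ref{subsec:schdef}). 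On the categorical side, a choice of $\BbK$-basis for each morphism complex of the dg model is, locally, a $\Bbbk$-linear splitting of the Hom modules above, and applying the lifts $\tilde\xi_i$ coordinatewise to the dg structure maps $\mu^*$ produces a Hochschild $2$-cochain of $\dbdg{Y}$ whose \v{C}ech-$0$ part is inner and whose \v{C}ech-$1$ part is precisely composition with $\tilde\xi_i - \tilde\xi_j$; this is exactly the recipe $\xi(\mu^*)$ defining $\mathsf{KS}_{cat}(\xi)$. Tracing the Lowen--To\"{e}n isomorphism $\HH^\bullet(Y) \cong \HH^\bullet(\dbdg{Y})$ through the \v{C}ech model, and using that $\HKR$ in the bottom bidegree $H^1(\mathcal{T}Y) \to \HH^2(Y)$ is the tautological inclusion of derivations into the Hochschild double complex, this exhibits $\mathsf{KS}_{cat}(\xi)$ and $\HKR_{cat}(c(\xi))$ as the same Hochschild $2$-class, proving the proposition.

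The main obstacle is making the Lowen--To\"{e}n isomorphism $\HH^\bullet(Y) \cong \HH^\bullet(\dbdg{Y})$ explicit enough in the \v{C}ech/dg framework to transport cocycles by hand: abstractly it is derived from the identification of $\dbdg{Y \times Y}$ with a dg tensor square of $\dbdg{Y}$, and one needs a combinatorial model in which that equivalence, the HKR quasi-isomorphism, and the categorical Kodaira--Spencer recipe $\mu^* \mapsto \xi(\mu^*)$ all live on the same bicomplex. The cleanest route, I expect, is Yekutieli's mixed resolutions or the Calaque--Van den Bergh explicit \v{C}ech HKR, in either of which all three ingredients are realized on a single \v{C}ech--bar bicomplex; the proposition then reduces to the elementary observation that $\xi$ acts on the structure constants of the dg enhancement, so that \v{C}ech--differentiating its action yields exactly the defect cocycle $\tilde\xi_i - \tilde\xi_j$. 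What remains beyond setting up that model is sign and grading bookkeeping rather than any genuine geometric content.
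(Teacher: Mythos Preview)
Your outline and the paper's argument share the same opening move---fix a finite affine cover and trace the class through a chain of comparison isomorphisms---but the implementations diverge. The paper does \emph{not} lift $\xi$ to local derivations $\tilde\xi_i$ of the $A_i$. Instead it works with the first-order deformation $\mathcal{Y}_\xi = Y \times_\BbK \BbK_\varepsilon$ and the associated 2-extension obtained by splicing the Atiyah and conormal sequences; it then passes to Gerstenhaber--Schack's category of diagram bimodules, through the diagram algebra $\algd!$, and through Lowen--Van den Bergh's abelian Hochschild cohomology, checking at each step that an intrinsically defined ``deformation class'' is preserved. The technical core consists of two lemmas identifying the abstract 2-extension with the explicit cocycle $\beta_\Algd = \xi(m) + \xi(\varphi)$ on the generalized simplicial bar resolution. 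Your route instead uses smoothness to trivialise the deformation locally via $\tilde\xi_i$, pushing all the content into the \v{C}ech-$1$ defect $\tilde\xi_i - \tilde\xi_j$; this is a legitimate and in some ways cleaner representative of the same class, and it buys you a direct match with the classical \v{C}ech description of $\KS_{class}$.

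Where your proposal falls short is precisely the step you flag as ``the main obstacle'' and then set aside. You assert that in a suitable \v{C}ech-dg model the cocycle $\xi(\mu^*)$ decomposes as an inner \v{C}ech-$0$ part plus a \v{C}ech-$1$ part given by composition with $\tilde\xi_i - \tilde\xi_j$, and that the Lowen--To\"en isomorphism then transports this to $\HKR_{cat}(c(\xi))$. But $\xi(\mu^*)$ is defined by differentiating structure constants in a chosen $\BbK$-basis of the morphism complexes of $\dbdg{Y}$, and such bases have no a priori relation to your lifts $\tilde\xi_i$; exhibiting the coboundary that realises your claimed decomposition, and then checking that what remains really is the image of $c(\xi)$ under the specific chain of isomorphisms defining $\HKR_{cat}$, is exactly the content of the paper's Lemmas on $\beta_\Alg$ and $\beta_\Algd$ together with the Swan and Lowen--Van den Bergh comparison steps. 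Calling this ``sign and grading bookkeeping'' understates it: without a concrete model in which all three ingredients (HKR, the sheaf-to-category comparison, and the recipe $\mu^* \mapsto \xi(\mu^*)$) are simultaneously realised on the same complex, you have an outline but not a proof.
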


Let us give a preview of the proof. 
We recall the construction of the isomorphism $\HKR_{cat}$. 
First, we have the HKR isomorphism \cite{Hochschild1962,Swan1996,Yekutieli2002}
\[ \HKR \co HT^\bullet(Y) \to \ext^\bullet_{Y \times Y}(\Delta_* \mathcal{O}_Y,\Delta_* \mathcal{O}_Y).\]  

Next, we have the isomorphism 
\[ \ext^\bullet_{Y \times Y} (\Delta_* \mathcal{O}_Y,\Delta_* \mathcal{O}_Y) \cong  \ext^\bullet_{\algd\bimod}(\algd,\algd)\]
(see \cite[Theorem 3.1]{Swan1996}). 
Here, we fix an open affine cover of $Y$, and $\algd$ is the corresponding diagram of $\BbK$-algebras of functions, in the sense of Gerstenhaber and Schack (see \cite[\S 28]{Gerstenhaber1988b}). 

Next, we have the isomorphism
\[ \ext^\bullet_{\algd\bimod}(\algd,\algd) \cong \ext^\bullet_{\algd!\bimod}(\algd!,\algd!),\]
where $\algd!$ is the `diagram algebra': this is the `special cohomology comparison theorem' of Gerstenhaber and Schack \cite{Gerstenhaber1983,Gerstenhaber1988}. 

Next, we have the isomorphism
\[ \ext^\bullet_{\algd!\bimod}(\algd!,\algd!) \cong \pmb{\mathsf{H}}^\bullet_{\mathrm{ab}}(\algd\modd),\]
where the latter is Lowen and Van den Bergh's Hochschild cohomology of the abelian category $\algd\modd$: see \cite[Theorem 7.2.2]{Lowen2005}.

Finally, we have the isomorphisms
\[ \pmb{\mathsf{H}}^\bullet_{\mathrm{ab}}(\algd\modd) \cong \pmb{\mathsf{H}}^\bullet_{\mathrm{ab}}(Coh(Y)) \cong  \HH^\bullet(\dbdg{Y}),\]
proved in \cite[Corollary 7.7.3 and Theorem 6.1]{Lowen2005}.

For each of these Hochschild cohomology-type algebras, we define a `deformation class' associated to $\xi$ (equal to $\KS_{class}(\xi)$ in $HT^2(Y)$, and to $\KS_{cat}(\xi)$ in  $\HH^2(\dbdg{Y})$), and prove that each isomorphism in the chain respects deformation classes. 
In fact, up until the categories start appearing (with $\pmb{\mathsf{H}}^\bullet_{\mathrm{ab}}(\algd\modd)$), we associate a deformation class to an arbitrary first-order deformation of the scheme $Y$, of which the deformations associated to a derivation of the base are a special case.

\subsection{Deformations of algebras}\label{subsection:deform algebras}

We begin the proof of Proposition \ref{prop:hkronks} with local considerations, based on an account by Bezrukavnikov--Ginzburg \cite{Bezrukavnikov2007}.

Let $\BbK$ be a field, and $\alg$ an associative $\BbK$-algebra. 
The multiplication map
 \[  m_\alg \colon \alg\otimes_\BbK \alg\to \alg   \]
is a surjective map of $(\alg,\alg)$-bimodules. 
Define $I_{\alg} :=\ker  m_\alg$ as an $(\alg,\alg)$-bimodule; it is isomorphic to the space of ($\BbK$-relative) noncommutative 1-forms $\Omega^\nc_\alg$ \cite{Cuntz1995}. 
The map
\[  \mathsf{d}\colon \alg\to \Omega^\nc_\alg,\quad  \mathsf{d} x =   x\otimes 1 - 1\otimes x \]
is the universal noncommutative derivation of $\alg$ (i.e., the universal map $\mathsf{d}' \co  \alg \to B$, where $B$ is an $(\alg,\alg)$ bimodule, satisfying $\mathsf{d}'(x\cdot y) = \mathsf{d}'x \cdot y + x \cdot \mathsf{d}'y$).

If $\alg$ is commutative, then $\Omega_\alg \cong I_\alg/I_\alg^2$ is the space of commutative 1-forms: the induced map $\mathsf{d} \co \alg \to \Omega_\alg$ is the universal commutative derivation of $\alg$  (i.e., the universal map $\mathsf{d}' \co \alg \to B$, where $B$ is an $\alg$-module, satisfying $\mathsf{d}'(x \cdot y) = x \cdot \mathsf{d}'y + y \cdot \mathsf{d}'x$).

\paragraph{The Atiyah class.} The short exact sequence of bimodules
\begin{equation}
\label{eqn:atnc}
 0\to \Omega^\nc_{\alg} \to \alg\otimes \alg\to \alg \to 0
\end{equation}
gives rise to a morphism $  \alg \to \Omega^\nc_\alg[1] $
in the derived category of $(\alg,\alg)$-bimodules: this morphism is called the \emph{noncommutative Atiyah class}, and denoted
\[ \At^\nc_\alg \in \ext^1_{\alg\bimod}(\alg,\Omega^\nc_\alg).\]
If $\alg$ is commutative, we consider the short exact sequence of bimodules
\begin{equation}
\label{eqn:atcomm} 
0 \to \Omega_\alg \to \alg \otimes \alg/I_\alg^2 \to \alg \to 0,
\end{equation}
which gives rise to the \emph{commutative Atiyah class}
\[ \At_\alg \in \ext^1_{\alg \bimod}(\alg,\Omega_\alg).\]

\paragraph{The Kodaira--Spencer class.} 
Let $V$ be a finite-dimensional $\BbK$-vector space, and 
\[\BbK_\varepsilon := \BbK[V]/V^2 \cong \BbK \oplus V^*.\]
Let $\Alg$ be a \emph{$\BbK_\varepsilon$-deformation of $\alg$}, i.e., a free $\BbK_\varepsilon$-algebra equipped with a $\BbK$-algebra isomorphism $\Alg \otimes_{\BbK_\varepsilon} \BbK \cong \alg$. 
Thus one has a surjective homomorphism $f\colon\Alg \to \alg$ with kernel isomorphic to $ V^*\otimes \alg$, and squaring to $0$.
This gives rise to a short exact sequence of $(\alg,\alg)$ bimodules \cite[Corollary 2.11]{Cuntz1995}
\begin{equation}\label{differentials}
 0\to V^* \otimes \alg \xrightarrow{ 1\otimes \mathsf{d} \otimes 1}    \alg \otimes_\Alg \Omega^\nc_\Alg \otimes_\Alg \alg \xrightarrow{f \otimes f} \Omega^\nc_\alg \to 0.  
 \end{equation}
This yields a map $\Omega^\nc_\alg \to V^*\otimes \alg [1]$ in the derived category of $(\alg,\alg)$-bimodules: this morphism is called the \emph{noncommutative Kodaira--Spencer class}, and denoted
\[ \theta^\nc_\Alg \in \ext^1_{\alg\bimod}(\Omega^\nc_\alg,V^*\otimes \alg). \]

Now we consider the case that $\alg$ and $\Alg$ are commutative: the analogue of \eqref{differentials} is the conormal short exact sequence of $\alg$-modules
\begin{equation}
\label{eqn:commKS}
 0 \to V^* \otimes \alg \to \alg \otimes_\Alg \Omega_\Alg \to \Omega_\alg \to 0,
\end{equation}
which yields the \emph{commutative Kodaira--Spencer class}
\[ \theta_\Alg \in \ext^1_{\alg\modd}(\Omega_\alg,V^* \otimes \alg).\]

\paragraph{The deformation class.}
Bezrukavnikov and Ginzburg define the \emph{deformation class} of $\Alg$ to be the composition of the Atiyah and Kodaira--Spencer classes in the derived category of $(\alg,\alg)$-bimodules:
\[ \deform_\alg(\Alg) := \theta_\Alg \circ \At_\alg \in \ext^2_{\alg\bimod}(\alg,V^* \otimes \alg).\]
It is represented by the 2-extension that is the splicing of the extensions defining $\At_\alg$ and $\theta_\Alg$:
\begin{equation}
\label{eqn:2ext}
 0 \to V^* \otimes \alg \to \alg \otimes_\Alg \Omega^\nc_\Alg \otimes_\Alg \alg \to \alg \otimes \alg \to \alg \to 0.
\end{equation}

When $\alg$ and $\Alg$ are commutative, we have an alternative description of the deformation class, in terms of the commutative Atiyah and Kodaira--Spencer classes. 
Namely, we apply the obvious exact functor $\alg\modd \to \alg\bimod$ to the short exact sequence \eqref{eqn:commKS}, and splice it with the short exact sequence \eqref{eqn:atcomm} to obtain the 2-extension
\begin{equation}
\label{eqn:2extcomm}
0 \to V^* \otimes \alg \to \alg \otimes_\Alg \Omega_\Alg \to \alg \otimes \alg/I_\alg^2 \to \alg \to 0.
\end{equation}
There is an obvious homomorphism of exact sequences of $(\alg,\alg)$ bimodules from  \eqref{eqn:2ext} to \eqref{eqn:2extcomm}, equal to the identity on both ends: so  these 2-extensions give rise to the same class in $ \ext_{\alg\bimod}^2(\alg,\alg)$. 
In particular, the 2-extension \eqref{eqn:2extcomm} is an alternative description of the deformation class, valid in the commutative case.

\paragraph{Cocycle representing the deformation class.}

Classically, the deformation class was defined by giving an explicit Hochschild cocycle associated to the deformation. 
To obtain a cochain complex computing $\ext^2_{\alg\bimod}(\alg,V^* \otimes \alg)$, we replace the diagonal bimodule by its bar resolution $\EuB_\bullet(\alg)$, where $\EuB_q(\alg) = \alg^{\otimes q+2}$. 
This gives the Hochschild cochain complex 
\begin{eqnarray*} 
CC^\bullet(\alg,V^*\otimes\alg) & := & \Hom_{\alg\bimod}(\EuB_\bullet(\alg),V^* \otimes \alg) \\
&\cong & \prod_{q \ge 0} \Hom_\BbK \left(\alg^{\otimes q},V^* \otimes \alg\right)
\end{eqnarray*}
equipped with the Hochschild differential $\delta$.

To obtain a cocycle in this complex that represents $\deform_\alg(\Alg)$, one chooses a $\BbK$-vector space splitting
\[ \xymatrix{ \Alg \ar@<0.5ex> @{->>}[r] & \alg \ar@<0.5ex> @{-->}[l]^s},\]
which induces an isomorphism of $\BbK$-vector spaces
\[ \Alg \cong \alg \oplus V^* \otimes \alg.\]

The multiplication then takes the form
\begin{equation}\label{eqn:deform mult}
 m_\Alg(a \oplus v \cdot b,c \oplus w \cdot d)  = m_\alg(a,c) \oplus \beta_\Alg(a,c) + v \cdot m_\alg(b,c) + w \cdot m_\alg(a,d). 
\end{equation}
The map $\beta_\Alg\colon \alg\otimes \alg \to V^*\otimes \alg$ is a Hochschild 2-cocycle (because $m_\Alg$ is associative), hence defines a class
\[  [\beta_\Alg] \in \ext^2_{\alg\bimod}(\alg,V^* \otimes \alg). \]

Bezrukavnikov and Ginzburg state that this class coincides with their definition of the deformation class $\deform_\alg(\Alg)$: we write down a proof, since we will want to extend the proof to a more general setting in the next section.

\begin{lem}
\label{lem:betadef}
We have $[\beta_\Alg] = \deform_\alg(\Alg)$.
\end{lem}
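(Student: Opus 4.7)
The class $\deform_\alg(\Alg) = \theta_\Alg \circ \At_\alg$ is defined as a 2-extension; to match it against the Hochschild cocycle $\beta_\Alg$, I need to pass to the bar-complex description of $\ext^2_{\alg\bimod}(\alg,V^*\otimes\alg)$. My plan is to exhibit an explicit chain map from the bar resolution $\EuB_\bullet(\alg) \to \alg$ to the 2-extension \eqref{eqn:2ext}, lifting the identity on $\alg$, and to read off that the induced bimodule map $\EuB_2(\alg) \to V^*\otimes\alg$ corresponds to the Hochschild cochain $\beta_\Alg$.

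The construction uses the splitting $s\colon\alg\to\Alg$ already fixed in the definition of $\beta_\Alg$, together with the universal derivation $\mathsf{d}\colon\Alg\to\Omega^\nc_\Alg$. At degree $0$ I take the identity $\alg\to\alg$; at degree $1$ I take the $(\alg,\alg)$-bimodule map
\[ \alg\otimes\alg\otimes\alg \to \alg\otimes_\Alg \Omega^\nc_\Alg\otimes_\Alg \alg,\qquad a_0\otimes a_1\otimes a_2 \mapsto a_0\otimes \mathsf{d} s(a_1)\otimes a_2.\]
Composing with the inclusion $E_1 \hookrightarrow \alg\otimes\alg$ from \eqref{eqn:atnc} yields $a_0 a_1\otimes a_2 - a_0\otimes a_1 a_2$, which agrees with the bar differential, so the chain-map condition is satisfied in these two degrees.

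To lift to degree $2$, I apply the bar differential $\EuB_2(\alg)\to\EuB_1(\alg)$ and then the degree-$1$ chain map. Using the $\otimes_\Alg$-relations to absorb the outer factors (rewriting $a_0 a_1 \otimes \mathsf{d} s(a_2)\otimes a_3$ as $a_0 \otimes s(a_1)\cdot \mathsf{d} s(a_2)\otimes a_3$, and similarly on the right), the image becomes
\[ a_0\otimes\bigl(\mathsf{d} s(a_1)\cdot s(a_2) + s(a_1)\cdot\mathsf{d} s(a_2) - \mathsf{d} s(a_1 a_2)\bigr)\otimes a_3.\]
The Leibniz rule for $\mathsf{d}$ collapses the bracket to $\mathsf{d}\bigl(s(a_1)s(a_2) - s(a_1 a_2)\bigr)$, which by the defining formula \eqref{eqn:deform mult} equals $\mathsf{d}(\beta_\Alg(a_1,a_2))$. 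Since $\beta_\Alg(a_1,a_2) \in V^*\otimes\alg$ sits in the square-zero ideal of $\Alg$, this element is precisely the image under the inclusion $V^*\otimes\alg \hookrightarrow E_1$ from \eqref{differentials}; chasing the bimodule structure (which on $V^*\otimes\alg$ is symmetric, because $V^*$ is central and squares to zero in $\Alg$) identifies the lift with $a_0\cdot \beta_\Alg(a_1,a_2)\cdot a_3 \in V^*\otimes\alg$. Under the natural bijection between elements of $CC^2(\alg, V^*\otimes\alg)$ and $(\alg,\alg)$-bimodule maps $\EuB_2(\alg)\to V^*\otimes\alg$, this is exactly the cochain $\beta_\Alg$.

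The real content of the argument is a one-line application of the Leibniz rule. The main obstacle is the bookkeeping required to carry out that application \emph{inside} the middle term $\alg\otimes_\Alg \Omega^\nc_\Alg\otimes_\Alg \alg$ of \eqref{eqn:2ext}: I must simultaneously juggle the inclusion $E_1\hookrightarrow\alg\otimes\alg$ of \eqref{eqn:atnc}, the inclusion $V^*\otimes\alg\hookrightarrow E_1$ of \eqref{differentials}, and the $\otimes_\Alg$-relations that determine how the splitting $s$ enters the formulas. Once these identifications are laid out carefully, the chain map is essentially forced and the cocycle calculation is short.
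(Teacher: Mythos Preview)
Your argument is correct and follows the same route as the paper: build an explicit chain map from the bar resolution to the 2-extension \eqref{eqn:2ext}, with components $\id$, $a_0\otimes a_1\otimes a_2\mapsto a_0\otimes \mathsf{d}s(a_1)\otimes a_2$, and $a_0\otimes a_1\otimes a_2\otimes a_3\mapsto a_0\cdot\beta_\Alg(a_1,a_2)\cdot a_3$, then read off the Hochschild cocycle. One small slip: the degree-$0$ component should be the identity on $\EuB_0(\alg)=\alg\otimes\alg$ (mapping to the term $\alg\otimes\alg$ of the 2-extension), not on $\alg$ itself; the rest of your computation already uses the correct indexing.
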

\begin{proof}
The 2-extension \eqref{eqn:2ext} defining $\deform_\alg(\Alg)$ gives rise to a morphism $\alg \to V^* \otimes  \alg[2]$ in the derived category of $(\alg,\alg)$ bimodules via the following diagram:
\[ \xymatrix{ && 0 \ar[r] & \alg \ar[r] & 0\\
 0 \ar[r] & V^* \otimes \alg \ar[r]  \ar[d]^{\id} & \alg \otimes_\Alg \Omega^\nc_\Alg \otimes_\Alg \alg \ar[r] & \alg \otimes \alg \ar[r] \ar[u]^{m_\alg} & 0 \\
0 \ar[r] & V^* \otimes \alg \ar[r] & 0. &&}\]
Namely, the map $m_\alg$ (viewed as a map of complexes as indicated by the diagram) is a quasi-isomorphism by exactness of \eqref{eqn:2ext}, hence can be inverted in the derived category, so we obtain the morphism $\alg \to V^* \otimes \alg[2]$. 

On the other hand, we can replace $\alg$ with $\EuB_\bullet(\alg)$, and write down an explicit chain map inverting the quasi-isomorphism $m_\alg$:
\[  \xymatrix{ \ldots \ar[r] & \alg \otimes \alg \otimes \alg \otimes \alg \ar[r] \ar[d]^{\beta_\Alg}& \alg \otimes \alg \otimes \alg \ar[r] \ar[d]^{ds}& \alg \otimes \alg \ar[r] \ar[d]^{\id} & 0 \\
 0 \ar[r] & V^* \otimes \alg \ar[r]  & \alg \otimes_\Alg \Omega^\nc_\Alg \otimes_\Alg \alg \ar[r] & \alg \otimes \alg \ar[r] & 0.}\]
Here, 
\[ ds \co a \otimes b \otimes c \mapsto a \otimes ds(b) \otimes c,\]
and
\[ \beta_\Alg \co a \otimes b \otimes c \otimes d \mapsto a \cdot \beta_\Alg(b,c) \cdot d.\] 
One easily verifies that this is a chain map (for this, it is helpful to observe that
\[\beta_\Alg(a,b) = s(a) \cdot s(b) - s(a\cdot b),\]
as follows immediately from \eqref{eqn:deform mult}). 
It also inverts $m_\alg$: this follows from the fact that the augmentation of the bar resolution is also given by $m_\alg$. 
Hence, composing this quasi-inverse with the obvious map to $V^* \otimes \alg$, we find the corresponding Hochschild cochain to be $\beta_\Alg$, as required.
\end{proof}

\subsection{Deformations of diagrams}

We recall the notion of a \emph{diagram of algebras}, following Gerstenhaber and Schack \cite[\S 17]{Gerstenhaber1988b} (whose notation and conventions we adopt). 
Let $\cB$ be a poset: we regard it as a category, with a unique map $i \to j$ if $i \le j$, and no other maps. 
If $v$ is a map in $\cB$, we denote the domain by $dv$ and the codomain by $cv$: so $v \in \cB(cv,dv)$. 

A \emph{diagram} over $\cB$ is a contravariant functor $\algd \co \cB^{op} \to \kalg$.
We write $\algd^i$ for $\algd(i)$, and $\varphi^v$ for $\algd(v)$: so $\varphi^v \co  \algd^{cv} \to \algd^{dv}$ is a $\BbK$-algebra homomorphism. 

\paragraph{Atiyah class.}
Gerstenhaber and Schack define an abelian category $\algd\bimod$ of $(\algd,\algd)$-bimodules, which has enough projectives and injectives. 
Applying the construction from the previous section locally, we have a short exact sequence
\begin{equation}
\label{eqn:atd}
 0 \to \Omega^\nc_\algd \to \algd \otimes \algd \to \algd \to 0,
\end{equation}
which defines a map $\algd \to \Omega^\nc_\algd[1]$ in the derived category of $(\algd,\algd)$-bimodules; the Atiyah class is the corresponding class
\[ \At_\algd \in \ext^1_{\algd\bimod}(\algd,\Omega^\nc_\algd).\]

\paragraph{Kodaira--Spencer class.} Now, let $\BbK_\varepsilon = \BbK[V]/V^2$ be as in the previous section. 
A \emph{deformation} of $\algd$ over $\BbK_\varepsilon$ is a diagram $\Algd$ of $\BbK_\varepsilon$-modules over $\cB$, equipped with an isomorphism $\Algd \otimes_{\BbK_{\varepsilon}} \BbK \cong \algd$: so each $\Algd^i$ is a deformation of $\algd^i$ over $\BbK_\varepsilon$.
Again, we can apply the construction of the previous section locally to obtain a short exact sequence
\begin{equation}
\label{eqn:ksd}
0\to V^* \otimes \algd \to    \algd \otimes_\Algd \Omega^\nc_\Algd \otimes_\Algd \algd \to \Omega^\nc_\algd \to 0,
\end{equation}
which defines a map $\Omega^\nc_\algd \to V^* \otimes \algd[1]$ in the derived category of $(\algd,\algd)$-bimodules; the Kodaira--Spencer class is the corresponding class
\[ \theta_\Algd \in \ext^1_{\algd\bimod}(\Omega^\nc_\algd,V^* \otimes \algd).\]

\paragraph{Deformation class.}
We define the deformation class $\deform_\algd(\Algd):= \theta_\Algd \circ \At_\algd$ as before: it corresponds to the 2-extension 
\begin{equation}
\label{eqn:defd}
 0 \to V^* \otimes \algd \to \algd \otimes_\Algd \Omega^\nc_\Algd \otimes_\Algd \algd \to \algd \otimes \algd \to \algd \to 0.
\end{equation}
We regard it as an element of $\Hom(V,\ext^2_{\algd\bimod}(\algd,\algd))$. 
If $\algd$ and $\Algd$ are commutative, then it also corresponds to the 2-extension
\begin{equation}
\label{eqn:defdcomm}
 0 \to V^* \otimes \algd \to \algd \otimes_\Algd \Omega_\Algd \to \algd \otimes \algd/I_\algd^2 \to \algd \to 0.
\end{equation}

\paragraph{Cocycle representing the deformation class.}
In the case of a single algebra, the Hochschild cochain complex $CC^\bullet(\alg,V^* \otimes \alg)$ has cohomology $\ext^\bullet_{\alg\bimod}(\alg,V^* \otimes \alg)$, because the bar resolution $\EuB_\bullet(\alg)$ is a projective resolution of the diagonal $(\alg,\alg)$-bimodule. 
For a diagram of algebras, the bar resolution $\EuB_\bullet(\algd^i)$ is \emph{locally projective} (i.e., projective as an $(\algd^i,\algd^i)$-bimodule for all $i$), but not \emph{projective} as an $(\algd,\algd)$-bimodule. 
To obtain a projective resolution of $\algd$, one must use Gerstenhaber and Schack's \emph{generalized simplicial bar resolution}, denoted $\EuS_\bullet\EuB_\bullet(\algd)$ \cite[\S 20]{Gerstenhaber1988b}. 

The resulting cochain complex is (in the notation of \cite[\S 21]{Gerstenhaber1988b})
\begin{eqnarray*}
CC^r(\algd,V^* \otimes\algd) &:=& \bigoplus_{p+q=r}\Hom_{\algd\bimod}(\EuS_p\EuB_q(\algd),V^* \otimes \algd) \\
&\cong& \bigoplus_{p+q=r}\prod_{\dim \sigma = p} CC^q(\algd^{c\sigma},|V^* \otimes \algd^{d\sigma}|_{|\sigma|}),
\end{eqnarray*}
where the product is over all $p$-dimensional simplices $\sigma$ in the poset $\cB$.

Given a deformation of diagrams $\Algd$, and a splitting 
\[ \xymatrix{ \Algd^i \ar@<0.5ex> @{->>}[r] & \algd^i \ar@<0.5ex> @{-->}[l]^{s^i}}\]
for all $i$ (the $s^i$ can be chosen independently), Gerstenhaber and Schack define a cochain $\beta_\Algd \in CC^2(\algd,V^*\otimes\algd)$. 
Namely, $\beta_\Algd$ has a component for each $0$-simplex
\begin{eqnarray*}
\beta_\Algd^{(i)} & \in &CC^2(\algd^i,V^* \otimes \algd^i), \\
\beta_\Algd^{(i)}(a,b) &:=& s^i(a) \cdot s^i(b) - s^i(a \cdot b),
\end{eqnarray*}
and a component for each $1$-simplex, i.e., for each morphism $v \in \cB(cv,dv)$: 
\begin{eqnarray*}
\beta_\Algd^{(v)} & \in & CC^1(\algd^{cv},V^*\otimes \algd^{dv}), \\
\beta_\Algd^{(v)}(a) &=& s^{dv} \circ \algd(v) - \Algd(v) \circ s^{cv}.
\end{eqnarray*}

\begin{lem}
\label{lem:betadefd}
We have $[\beta_\Algd] = \deform_\algd(\Algd)$.
\end{lem}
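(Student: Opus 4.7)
The plan is to mimic the proof of Lemma \ref{lem:betadef} using the generalized simplicial bar resolution $\EuS_\bullet \EuB_\bullet(\algd)$ in place of the ordinary bar resolution. Concretely, I want to build an explicit chain map from $\EuS_\bullet \EuB_\bullet(\algd)$ to the 2-extension \eqref{eqn:defd} (viewed as a complex in degrees $0$ through $2$), verify that it inverts the augmentation quasi-isomorphism down to $\algd$, and then read off the corresponding cochain in $CC^2(\algd,V^*\otimes\algd)$ and identify it with $\beta_\Algd$.

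First I would handle the $0$-simplex part. On each vertex $i \in \cB$, the $(\algd^i,\algd^i)$-bimodule components of \eqref{eqn:defd} are exactly the local 2-extension \eqref{eqn:2ext} for the deformation $\Algd^i$ of $\algd^i$. Using the splitting $s^i$, Lemma \ref{lem:betadef} produces an explicit chain map $\EuB_\bullet(\algd^i) \to \eqref{eqn:2ext}$ whose topmost component is $\beta_{\Algd^i}$; this reproduces the vertex components $\beta_\Algd^{(i)}$ of the cocycle.

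Next I would treat the $1$-simplex part. For a morphism $v \in \cB(cv,dv)$, the simplicial direction of the resolution $\EuS_\bullet \EuB_\bullet(\algd)$ couples $\EuB_\bullet(\algd^{cv})$ to $\EuB_\bullet(\algd^{dv})$ via the restriction $\varphi^v$, and the cochain complex contains a summand $CC^1(\algd^{cv},V^*\otimes \algd^{dv})$ in total degree $2$. To construct the chain map in this summand, I use that $\Algd(v) \circ s^{cv}$ and $s^{dv} \circ \algd(v)$ are two $\BbK$-linear lifts of $\algd(v)$: their difference, landing in $V^*\otimes\algd^{dv}$, gives a natural $1$-cochain on $\algd^{cv}$ valued in $V^*\otimes \algd^{dv}$. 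Unwinding the definition of $\EuS_\bullet \EuB_\bullet$ and the connecting maps in \eqref{eqn:ksd}, one verifies that this is precisely the map needed to complete the chain homotopy across the edge $v$, and that it reproduces $\beta_\Algd^{(v)}$.

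Finally I would check two things. One, that higher-dimensional simplices contribute nothing: the $2$-extension \eqref{eqn:defd} is built bimodule-locally (only first-order restrictions along individual morphisms appear), so after choosing the vertex and edge components above, the remaining simplicial components of any chain map are forced to vanish, and the cocycle condition of $\beta_\Algd$ proved in \cite[\S 21]{Gerstenhaber1988b} is exactly what is needed for the resulting map to be a chain map on the Gerstenhaber--Schack complex. Two, that this map does invert the augmentation $\EuS_\bullet \EuB_\bullet(\algd) \to \algd$, which is immediate since on $0$-simplices it restricts to the quasi-inverse from Lemma \ref{lem:betadef} and the edge components land in degrees $\ge 1$. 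The main obstacle will be the bookkeeping in the second step: tracking signs in the Gerstenhaber--Schack differential and showing that the $1$-simplex component $\beta_\Algd^{(v)} = s^{dv}\circ \algd(v) - \Algd(v) \circ s^{cv}$ arises as the unique way to patch the vertex chain maps across $v$ using the definition of $\theta_\Algd$ via \eqref{eqn:ksd}; once this matching is in place, the equality $[\beta_\Algd]=\deform_\algd(\Algd)$ follows exactly as in the single-algebra case.
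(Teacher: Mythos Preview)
Your proposal is correct and follows essentially the same approach as the paper: build an explicit chain map from $\EuS_\bullet\EuB_\bullet(\algd)$ to the complex underlying the 2-extension \eqref{eqn:defd}, assembled from the local maps of Lemma \ref{lem:betadef} on $0$-simplices together with the edge cochains $\beta_\Algd^{(v)}$ on $1$-simplices, and check it inverts the augmentation. The paper carries out the bookkeeping you anticipate more explicitly, decomposing the differential as $\delta'+\delta''+\delta'''$ (simplicial, Hochschild, internal) and verifying the chain-map equations $\delta'\beta^{0,2}+\delta''\beta^{1,1}=0$, $\delta'\beta^{0,1}+\delta'''\beta^{1,1}=0$, $\delta'\beta^{0,0}=0$, $\delta'\beta^{1,1}=0$ directly rather than citing \cite{Gerstenhaber1988b}.
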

\begin{proof}
We follow the proof of Lemma \ref{lem:betadef}. 
We replace $\algd$ by its projective resolution $\EuS_\bullet\EuB_\bullet(\algd)$, and construct a morphism 
\begin{equation}
\label{eqn:morphSBa}
\EuS_\bullet\EuB_\bullet(\algd) \to \{0 \to V^* \otimes \algd \to \algd \otimes \Omega^\nc_{\Algd} \otimes \algd \to \algd \otimes \algd \to 0\}
\end{equation}
whose composition with the augmentation from the right-hand side to $\algd$ is the augmentation of the generalized simplicial bar resolution. 

Giving a morphism \eqref{eqn:morphSBa} is equivalent to giving a cocycle
\[ \beta  \in  CC^0(\algd,0 \to V^*\otimes \algd \to \algd\otimes\Omega^\nc_\Algd\otimes\algd \to \algd \otimes \algd \to 0).\]
As proven in \cite[\S 21]{Gerstenhaber1988b}, this cochain complex is isomorphic to
\begin{eqnarray} 
\label{eqn:ccdim0}&&\prod_{i} CC^2(\algd^i,V^* \otimes \algd^i) \oplus CC^1(\algd^i,\algd^i \otimes\Omega^\nc_{\Algd^i} \otimes \algd^i) \oplus CC^0(\algd^i,\algd^i \otimes \algd^i) \\
\label{eqn:ccdim1}& \oplus& \prod_{i \overset{v}{\to} j} CC^1(\algd^j,V^* \otimes \algd^i) \oplus CC^0(\algd^j,\algd^i \otimes\Omega^\nc_{\Algd^i} \otimes \algd^i) \\
\label{eqn:ccdim2}& \oplus & \prod_{i \overset{v}{\to} j \overset{w}{\to} k} CC^0(\algd^k,V^* \otimes \algd^i).
\end{eqnarray}
The differential on this complex has three components:
\[ \delta = \delta' + \delta'' + \delta''',\]
where $\delta'$ is the differential in the simplicial direction (it increases $p$), $\delta''$ is the Hochschild differential (it increases $q$), and $\delta'''$ is composition with the differential in the complex $0 \to V^* \otimes \algd \to \algd \otimes \Omega^\nc_\Algd \otimes \algd \to \algd \otimes \algd \to 0$.

The morphism $\beta$ we construct has a component $\beta^{0,2} \oplus \beta^{0,1} \oplus \beta^{0,0}$ in \eqref{eqn:ccdim0}: it coincides with the construction in the proof of Lemma \ref{lem:betadef}, applied to the individual deformations $\Algd^i$ with splittings $s^i$ (in particular, $\beta^{0,2}$ is the product of the cochains $\beta^{(i)}_\Algd$ defined above).
It also has a component $\beta^{1,1} \oplus 0$ in \eqref{eqn:ccdim1}: this is the product of the cochains $\beta^{(v)}_\Algd$ defined above. 
The component $\beta^{2,0}$ in \eqref{eqn:ccdim2} vanishes.

To prove that this $\beta$ is a cocycle, we must show that
\[ (\delta'+\delta''+\delta''')(\beta^{0,2} + \beta^{0,1} + \beta^{0,0} + \beta^{1,1}) = 0.\]
It follows from the proof of Lemma \ref{lem:betadef}, applied to each individual deformation $\Algd^i$, that
\[ (\delta'' + \delta''')(\beta^{0,2} + \beta^{0,1} + \beta^{0,0}) = 0.\]
So it remains to check that
\[ \delta'(\beta^{0,2} + \beta^{0,1} + \beta^{0,0}) + (\delta'+\delta''+\delta''')(\beta^{1,1}) = 0.\]
Indeed, one easily verifies that 
\begin{eqnarray*}
\delta'\beta^{0,2} + \delta'' \beta^{1,1} &=& 0,\\
\delta' \beta^{0,1} + \delta''' \beta^{1,1} &=& 0,\\
\delta'\beta^{0,0} &=& 0,\\
\delta'\beta^{1,1} &=& 0.
\end{eqnarray*}

Therefore, $\beta$ defines a chain map. 
Because $\beta^{0,0} = e \otimes e$ is the identity, its composition with the augmentation of the 2-extension coincides with the augmentation of the generalized simplicial bar resolution. 
It is clear that the composition with the map to $V^* \otimes \algd$ is the deformation class $\beta_\Algd = \beta^{0,2} + \beta^{1,1}$, as required: this completes the proof.
\end{proof}

\subsection{Derivations on the base}
\label{subsec:baseder}

Let $\BbK_\varepsilon := \BbK[\varepsilon]/\varepsilon^2$. 
Let $\xi \in \deriv_\Bbbk \BbK$ be a $\Bbbk$-relative derivation of $\BbK$. 
There is a corresponding map of $\BbK$-algebras
\begin{eqnarray*} 
\BbK & \to & \BbK_\varepsilon, \\
k & \mapsto & k + \varepsilon \cdot \xi(k).
\end{eqnarray*}
Hence, to any $\BbK$-algebra $\alg$ we can associate a deformation over $\BbK_\varepsilon$, \[ \Alg_\xi :=\alg \otimes_\BbK \BbK_\varepsilon,\]
where $\BbK_\varepsilon$ is regarded as a $\BbK$-algebra via the above map. 
We denote the corresponding deformation class by
\[\deform_\alg(\xi) := \deform_\alg(\Alg_\xi) \in \ext^2_{\alg\bimod}(\alg,\alg).\]

If we choose a $\BbK$-basis for $\alg$, we obtain a natural splitting for $\Alg_\xi$: namely, 
\[ s(a) := a\otimes 1 - \xi(a) \otimes \varepsilon.\]
Here, `$\xi(a)$' denotes the map which applies $\xi$ to the coefficients of $a$ with respect to the chosen $\BbK$-basis. 
With respect to this splitting, the deformation cocycle is
\[ \beta_{\Alg_\xi} = \xi(m_\alg),\]
i.e., the matrix with respect to the chosen $\BbK$-basis is obtained by applying $\xi$ to the matrix of the multiplication map $m_\alg$. 

The same construction applies to diagrams of $\BbK$-algebras: given a diagram $\algd$ and a derivation $\xi \in \deriv_\Bbbk \BbK$, we obtain a deformation of $\algd$ over $\BbK_\varepsilon$, namely
\[ \Algd_\xi := \algd \otimes_\BbK \BbK_\varepsilon,\]
and we denote the associated deformation class by
\[ \deform_\algd(\xi):=\deform_\algd(\Algd_\xi) \in \ext^2_{\algd\bimod}(\algd,\algd).\]
Now suppose we choose $\BbK$-bases for each $\algd^i$, and form the associated splittings $s^i$ as above: then we can write the deformation cocycle $\beta_{\Algd_\xi}$ explicitly. 
It has a component $\beta^{(i)}_{\Algd_\xi}$ for each $0$-simplex, which is equal to
\[ \xi(m_{\algd^i}) \co \algd^i \otimes \algd^i \to \algd^i,\]
i.e., the result of applying $\xi$ to the matrix of the multiplication map with respect to the chosen $\BbK$-basis; it also has a component $\beta^{(v)}_{\Algd_\xi}$ for each $1$-simplex, which is equal to
\[ \xi(\varphi^v) \co \algd^{cv} \to \algd^{dv},\]
i.e., the result of applying $\xi$ to the matrix of the restricting map $\varphi^v$, with respect to the chosen $\BbK$-bases on the domain and codomain.

The fact that $\beta_{\Algd_\xi}$ is a cocycle, and that it represents $\deform_\algd(\xi)$, follow from the results of the previous section. 
In this case, the proofs amount to nothing more than applying the product rule to the equations $m_{\algd^i}(m_{\algd^i}(\cdot,\cdot),\cdot) = m_{\algd^i}(\cdot,m_{\algd^i}(\cdot,\cdot))$ (associativity of $\algd^i$), $m_{\algd^{dv}}(\varphi^v(\cdot),\varphi^v(\cdot)) = \varphi^v(m_{\algd^{dv}}(\cdot,\cdot))$ ($\varphi^v$ is an algebra homomorphism), and $\varphi^u \varphi^v = \varphi^{uv}$ ($\algd$ is a functor).

\subsection{The diagram algebra}

Given a diagram of $\BbK$-algebras $\algd$, Gerstenhaber and Schack define the \emph{diagram algebra} $\algd!$, which is an ordinary $\BbK$-algebra; and they prove the \emph{special cohomology comparison theorem} \cite{Gerstenhaber1983,Gerstenhaber1988}, which implies that there is an isomorphism
\begin{equation}
\label{eqn:scct}
\ext^\bullet_{\algd\bimod}(\algd,\algd) \cong \ext^\bullet_{\algd!\bimod}(\algd!,\algd!).
\end{equation}
Furthermore, any deformation $\Algd$ of $\algd$ over $\BbK_\varepsilon$ induces a deformation of $\algd!$ over $\BbK_\varepsilon$, namely $\Algd!$; and the isomorphism \eqref{eqn:scct} takes $\deform_\algd(\Algd)$ to $\deform_{\algd!}(\Algd!)$, as one easily shows from the explicit cochain-level formula for the isomorphism \eqref{eqn:scct} derived in \cite[\S 17]{Gerstenhaber1983}. 

As a particular case of this, if $\xi \in \deriv_\Bbbk \BbK$, then the map \eqref{eqn:scct} sends $\deform_\algd(\xi)$ to $\deform_{\algd!}(\xi)$.

\subsection{Deformation classes of categories}

Let $\EuA$ be a $\BbK$-linear $A_\infty$ category, and $\xi \in \deriv_\Bbbk \BbK$. 
We have an associated deformation class $\deform_\EuA(\xi):=\KS_{cat}(\xi) \in \HH^2(\EuA)$. 
It can be defined by giving an explicit cochain-level representative: if we choose a $\BbK$-basis for each morphism space in $\EuA$, and write the matrices of the $A_\infty$ structure maps $\mu^*$ with respect to those bases, then $\deform_\EuA(\xi)$ is represented on the cochain level by $v(\mu^*)$. 
If $\EuB \subset \EuA$ is a full $A_\infty$ subcategory, there is an obvious restriction map
\[ CC^\bullet(\EuA) \to CC^\bullet(\EuB),\]
and it is obvious that this map takes $\deform_\EuA(\xi)$ to $\deform_\EuB(\xi)$.  
In fact one can check that the deformation class is a Morita invariant (see \cite[\S 4.4]{Sheridan2015a}). 

In particular, one can consider the special case of an ordinary $\BbK$-linear category $\EuA$, i.e., one for which the $A_\infty$ structure maps $\mu^s$ vanish for $s \neq 2$. 
Lowen and Van den Bergh \cite{Lowen2005} define the Hochschild cohomology of an abelian category $\EuC$ to be
\[ \pmb{\mathsf{H}}^\bullet_{\mathrm{ab}}(\EuC) := \HH^\bullet(\mathsf{Inj Ind}(\EuC));\]
we make the obvious definition
\[ \deform^{\mathrm{ab}}_{\EuC}(\xi) := \deform_{\mathsf{InjInd}(\EuC)}(\xi).\]
They prove \cite[Theorem 7.2.2]{Lowen2005} that for any diagram $\algd$, there is an isomorphism
\[ \pmb{\mathsf{H}}^\bullet_{\mathrm{ab}}(\algd\modd) \cong \ext^\bullet_{\algd!\bimod}(\algd!,\algd!),\]
which arises from the restriction map to the single-object subcategory of $\algd\modd$ consisting of a projective generator whose endomorphism algebra is $\algd!$. 
It follows that this isomorphism sends $\deform^{\mathrm{ab}}_{\algd\modd}(\xi)$ to $\deform_{\algd!}(\xi)$.

Now let $Y$ be a quasi-projective scheme over $\BbK$. 
Let $Open(Y)$ denote the poset of open affine subsets of $Y$.  
Let $Y = \cup_{i=1}^n A_i$ be a finite open affine covering of $Y$, and let $\cB \subset Open(Y)$ be the sub-poset consisting of $A_J := \cap_{i \in J} A_i$ for $\emptyset \neq J \subset \{1,\ldots,n\}$. 
We obtain a diagram of $\BbK$-algebras $\algd$ over $\cB$, namely $\algd^J := \mathcal{O}_Y(A_J)$, with the obvious restriction maps. 

Lowen and Van den Bergh prove \cite[Corollary 7.7.3]{Lowen2005} that there is an isomorphism
\[ \pmb{\mathsf{H}}^\bullet_{\mathrm{ab}}(Coh(Y)) \cong \pmb{\mathsf{H}}^\bullet_{\mathrm{ab}}(\algd\modd).\]
The isomorphism respects deformation classes. 

Finally, they consider a certain $\mathsf{dg}$ enhancement of $D^bCoh(Y)$, which they denote $^eD^b(Coh(Y))$, and we will denote $\dbdg{Y}$ (recalling that the $\mathsf{dg}$ enhancement is unique up to quasi-equivalence). 
They prove \cite[Theorem 6.1]{Lowen2005} that there is an isomorphism
\[ \pmb{\mathsf{H}}^\bullet_{\mathrm{ab}}(Coh(Y)) \cong \HH^\bullet(\dbdg{Y}).\]
The isomorphism respects deformation classes. 

\subsection{Deformations of schemes}
\label{subsec:schdef}

As in the previous section, let $Y$ be a quasi-projective scheme over $\BbK$.
Swan \cite{Swan1996} defines the Hochschild cohomology of $Y$ to be
\[ \HH^\bullet(Y) := \ext_{Y \times Y}(\Delta_* \mathcal{O}_Y,\Delta_*\mathcal{O}_Y),\]
where $\Delta \co Y \hookrightarrow Y \times Y$ is the inclusion of the diagonal. 

The construction of the algebraic Atiyah class globalizes. 
Namely, we have the short exact sequence
\begin{equation}
\label{eqn:atsch}
 0 \to \Delta_* \Omega^1_Y \to \mathcal{O}_{\Delta^{(2)}Y} \to \Delta_* \mathcal{O}_Y \to 0,
\end{equation}
where $\mathcal{O}_{\Delta^{(2)}Y}$ is the second infinitesimal neighbourhood of the diagonal. 
This short exact sequence gives rise to a morphism $\Delta_* \mathcal{O}_Y \to \Delta_* \Omega^1_Y[1]$ in $D^bCoh(Y \times Y)$, whose associated class
\[ \At_Y \in \ext^1_{Y\times Y} (\Delta_*\mathcal{O}_Y, \Delta_* \Omega^1_Y )\] 
is the standard geometric Atiyah class. 

The construction of the algebraic Kodaira--Spencer class also globalizes. 
Let $V$ be a $\BbK$-vector space, and $\BbK_\varepsilon := K[V]/V^2$ as before. Let $\mathcal{Y}$ be a deformation of $Y$ over $\BbK_\varepsilon$, i.e., a scheme $\mathcal{Y}$ over $\spec \BbK_\varepsilon$, equipped with an isomorphism $\mathcal{Y} \times_{\spec \BbK_\varepsilon} \spec \BbK \cong Y$. 

Let $i \co Y \to \mathcal{Y}$ denote the inclusion of the central fibre of the deformation.  The conormal short exact sequence
\[ 0 \to V^*\otimes \mathcal{O}_Y \to i^* \Omega^1_{\mathcal{Y}} \to \Omega^1_Y \to 0\]
gives rise to a morphism $\Omega^1_Y \to V^* \otimes \mathcal{O}_Y[1]$ in $D^bCoh(Y)$, whose associated class
\begin{eqnarray*}
\KS_\mathcal{Y} & \in & \ext^1_Y(\Omega^1_Y,V^* \otimes \mathcal{O}_Y) \\
& \cong  & \Hom(V,H^1(Y,\mathcal{T}Y))
\end{eqnarray*}
is called the \emph{Kodaira--Spencer map}. 

The pushforward short exact sequence
\[ 0 \to V^* \otimes \Delta_* \mathcal{O}_Y \to \Delta_* i^* \Omega^1_\mathcal{Y} \to \Delta_* \Omega^1_Y \to 0\]
gives rise to a morphism $\Delta_* \Omega^1_Y \to V^* \otimes \Delta_* \mathcal{O}_Y$ in $D^bCoh(Y \times Y)$. 
We define the \emph{deformation class} of $\mathcal{Y}$, $\deform_Y(\mathcal{Y}) \in V^* \otimes \HH^2(Y)$, to be the composition of these two maps in $D^bCoh(Y \times Y)$: 
\[ \deform_Y(\mathcal{Y}) := \KS_\mathcal{Y} \circ \At_Y \in \ext^2_{Y \times Y}(\Delta_* \mathcal{O}_Y,V^* \otimes \Delta_*\mathcal{O}_Y).\] 
It follows from a result of C\u{a}ld\u{a}raru \cite[Proposition 4.4]{Caldararu2005} that for any class $\alpha \in H^1(Y,\mathcal{T}Y) \cong \ext^1_Y(\Omega^1_Y,\mathcal{O}_Y)$, 
\[ \HKR(\alpha) = \Delta_*\alpha \circ \At_Y,\]
so in particular,
\begin{equation}
\label{eqn:caleq}
\HKR \circ \KS_\mathcal{Y} = \deform_Y(\mathcal{Y}).
\end{equation}

The deformation class is represented by the 2-extension
\begin{equation}
\label{eqn:2extsch}
 0 \to V^* \otimes \mathcal{O}_Y \to \Delta_* i^* \Omega^1_\mathcal{Y} \to \mathcal{O}_{\Delta^{(2)}Y} \to \Delta_* \mathcal{O}_Y \to 0.
\end{equation}

Now, let $Y = \cup_{i=1}^nA_i$ a finite open affine covering as in the previous section, and $\algd$ the associated diagram of $\BbK$-algebras.
There is an obvious exact functor $Coh(Y \times Y) \to \algd\bimod$, sending $\mathcal{F} \mapsto \mathbb{F}$, where $\mathbb{F}^J := \mathcal{F}(A_J \times A_J)$. 
It obviously sends $\Delta_* \mathcal{O}_Y \mapsto \algd$.
Swan proves \cite[Theorem 3.1]{Swan1996} that this functor gives rise to an isomorphism
\begin{equation}
\label{eqn:swaniso}
 \ext^\bullet_{Y \times Y}(\Delta_* \mathcal{O}_Y,\Delta_* \mathcal{O}_Y) \cong \ext^\bullet_{\algd\bimod}(\algd,\algd).
\end{equation}
This functor obviously sends the short exact sequence \eqref{eqn:2extsch} to the short exact sequence \eqref{eqn:defdcomm} defining the deformation class in the commutative case. 
Hence, the isomorphism $V^* \otimes \eqref{eqn:swaniso}$ respects deformation classes. 

As a particular case of the above, let $\xi \in \deriv_\Bbbk \BbK$: then we obtain a deformation $\mathcal{Y}_\xi := Y \times_\BbK \BbK_\varepsilon$, where $\BbK_\varepsilon$ is regarded as a $\BbK$-algebra via a map determined by $\xi$ as in \S \ref{subsec:baseder}. 
We define the \emph{classical Kodaira--Spencer map} (which appears in the statement of Proposition \ref{prop:KSHKR}) to be
\begin{eqnarray*}
 \KS_{class}\co  \deriv_\Bbbk \BbK &\to& H^1(Y,\mathcal{T}Y) \\
\KS_{class}(\xi) &:=& \KS_{\mathcal{Y}_\xi}(1)
\end{eqnarray*}
(here, `$1$' is regarded as an element of $V \cong \BbK$). 
We define the deformation class $\deform_Y(\xi) := \deform_Y(\mathcal{Y}_\xi)$.
It follows from \eqref{eqn:caleq} that
\[ \HKR(\KS_{class}(\xi)) = \deform_Y(\xi).\]

This completes the proof of Proposition \ref{prop:KSHKR}: we have explicitly identified the isomorphisms
\[ HT^\bullet(Y) \cong \HH^\bullet(Y) \cong \ldots \cong \HH^\bullet(\dbdg{Y}),\]
shown that the first one takes the Kodaira--Spencer class $\KS_{class}(\xi) \in HT^2(Y)$ to the deformation class $\deform_Y(\xi) \in \HH^2(Y)$, and shown that all subsequent isomorphisms respect deformation classes, up until $\deform_{\dbdg{Y}}(\xi) := \KS_{cat}(\xi)$.

\bibliographystyle{amsalpha}
\bibliography{library}

\end{document}